\documentclass[11pt]{amsart}
\usepackage{esint, amssymb}
\usepackage{stmaryrd}
\usepackage{fancyhdr}
\usepackage{enumerate}
\usepackage{mathtools}
\usepackage{tikz}
\usetikzlibrary{automata,positioning}
\usepackage{bbm}
\usepackage{blkarray}
\usepackage{xfrac}

\usepackage[text={420pt,650pt},centering]{geometry}

\usepackage{xcolor}
\usepackage{graphicx}
\usepackage{MnSymbol}
\usepackage[colorlinks=true, pdfstartview=FitV, linkcolor=blue, citecolor=blue, urlcolor=blue,pagebackref=false]{hyperref}
\usepackage{microtype}

\usepackage{bm}
\usepackage{scalerel} 
\usepackage{dsfont}
\usepackage[font={footnotesize}]{caption}

\usepackage{verbatim}

\usepackage{scalerel}[2014/03/10]
\usepackage[usestackEOL]{stackengine}
\def\dashint{\,\ThisStyle{\ensurestackMath{%
  \stackinset{c}{.2\LMpt}{c}{.5\LMpt}{\SavedStyle-}{\SavedStyle\phantom{\int}}}%
  \setbox0=\hbox{$\SavedStyle\int\,$}\kern-\wd0}\int}
  
\newcommand{\dashsum}{%
    \mathop{
        \mathchoice
        {\ooalign{$\displaystyle\sum$\cr\vphantom{$\displaystyle\sum$}\hidewidth$\displaystyle\kern 0.2ex\vcenter{\hrule height 0.8pt width 0.9em}\kern 0.2ex$\hidewidth}}
        {\ooalign{$\textstyle\sum$\cr\vphantom{$\textstyle\sum$}\hidewidth$\textstyle\kern 0.2ex\vcenter{\hrule height 0.6pt width 0.75em}\kern 0.2ex$\hidewidth}}
        {\ooalign{$\scriptstyle\sum$\cr\vphantom{$\scriptstyle\sum$}\hidewidth$\scriptstyle\kern 0.15ex\vcenter{\hrule height 0.4pt width 0.6em}\kern 0.15ex$\hidewidth}}
        {\ooalign{$\scriptscriptstyle\sum$\cr\vphantom{$\scriptscriptstyle\sum$}\hidewidth$\scriptscriptstyle\kern 0.15ex\vcenter{\hrule height 0.3pt width 0.5em}\kern 0.15ex$\hidewidth}}
    }
}

\fancypagestyle{first}{%
  \fancyhf{}
  \headrulewidth{0pt}

}

\newcommand{\N}{\mathbb N}

\newcommand{\R}{\mathbb R}

\newcommand{\Rd}{\mathbb R^d}
\newcommand{\Zd}{\mathbb{Z}^d}

\newcommand{\ep}{\varepsilon}

\renewcommand{\a}{\mathbf{a}}

\newcommand{\comp}[1]{#1^\text{c}}

\newcommand{\mc}[1]{\mathcal{#1}}

\definecolor{officegreen}{rgb}{0.0, 0.5, 0.0}

\newcommand{\hyp}{{\mathrm{hyp}}}
\newcommand{\kin}{{\mathrm{kin}}}

\DeclarePairedDelimiter{\abs}{\lvert}{\rvert}
\DeclarePairedDelimiter{\inp}{(}{)}

\DeclarePairedDelimiter{\insb}{\{}{\}}

\DeclarePairedDelimiter{\inprod}{\langle}{\rangle}
\DeclarePairedDelimiter{\norm}{\|}{\|}
\DeclarePairedDelimiter{\snorm}{\llbracket}{\rrbracket}

\DeclareMathOperator{\tr}{tr}

\newtheorem{thm}{Theorem}
\numberwithin{thm}{section}

\numberwithin{lem}{section}

\numberwithin{cor}{section}
\newtheorem{prop}{Proposition}
\numberwithin{prop}{section}
\numberwithin{equation}{section}

\newcommand{\tref}[1]{Theorem~\ref{t.#1}}
\newcommand{\pref}[1]{Proposition~\ref{p.#1}}

\newcommand{\cref}[1]{Corollary~\ref{c.#1}}

\newcommand{\eref}[1]{(\ref{e.#1})}

\theoremstyle{definition}

\numberwithin{defn}{section}

\numberwithin{ex}{section}

\theoremstyle{remark}

\numberwithin{rem}{section}

\hypersetup{
    colorlinks,
    citecolor=black,
    filecolor=black,
    linkcolor=black,
    urlcolor=black
}

\begin{document}
\title[Nash-Aronson Estimate for the Linear Kinetic Fokker-Planck equation]{Nash-Aronson Estimate for the Linear Kinetic Fokker-Planck equation}

\begin{abstract}
We prove a Nash-Aronson-type upper bound on the fundamental solution of the linear kinetic Fokker Planck equation with friction term, distinguishing two regimes. For long times, we derive a Gaussian upper bound matching the classical parabolic estimate, which reflects the averaging of the velocity variable that occurs in this regime. For short times, the fundamental solution is governed by that of the constant-coefficient Kolmogorov equation, with the friction and potential terms negligible. 
\end{abstract}

\author[P. Gaddy]{Philip Gaddy}
\address[P. Gaddy]{Courant Institute of Mathematical Sciences, New York University, 251 Mercer St., New York, NY 10012}
\email{philip.gaddy@courant.nyu.edu}

\date{\today}
\maketitle

\section{Introduction}
\subsection{Motivation and Summary of Results}
We study the forced kinetic Fokker-Planck equation
\begin{equation}
\label{e.hypoeq}
\partial_t f-\nabla_v\cdot\a\nabla_v f + v\cdot\a\nabla_v f -v\cdot \nabla_x f + \nabla H \cdot \nabla_v f = 0, \quad\text{on }(0,\infty)\times\Rd\times\Rd,
\end{equation}
where the diffusion matrix $\a(t,x,v)$ is uniformly elliptic in $v$, essentially bounded, and satisfies
\begin{equation}
\label{e.coefbnd}
\abs{v\cdot\nabla_v\cdot \a} \leq \Lambda\,.
\end{equation}
while the potential $H$ is bounded and differentiable. This is the Fokker-Planck equation associated with the Langevin dynamics
\begin{equation}
\label{e.hypomarkoveq}
\begin{cases}
dX_t = V_tdt \\
dV_t \,\,= (-\a V_t + \nabla H(X_t))dt + \sqrt{2}\a^{\sfrac{1}{2}}dW_t\,,
\end{cases}
\end{equation}
so $(X_t,V_t)$ is a hypoelliptic diffusion combining transport in $(x,v)$, linear friction in $v$, and noise only in the velocity variable. Our goal is to obtain Gaussian upper bounds of Nash-Aronson type for the Green's function $P(t,x,v,s,y,w)$ of this equation. In doing so, we demonstrate that the short-time Kolmogorov scaling develops into the long-time diffusive regime as time increases. Roughly speaking we prove that, for
$$P(t,x,v,s,y,w)\lesssim (t-s)^{-2d}\exp(-c\,d_{\hyp}^2/(t-s)) $$
for small $t-s$, where $d_{\hyp}$ is the anisotropic distance associated with the vector fields $\partial_v$ and $v\cdot \nabla_x$, that is 
$$d_{\hyp}(t,x,v,s,y,w)^2 = \abs{(t-s)^{-1}(x-y) +w}^2+ \abs{v-w}^2\,.$$
Additionally, we show that
$$P(t,x,v,s,y,w)\lesssim (t-s)^{-d/2}\exp(-c\,|x-y|^2/(t-s)) $$
for large $t-s$,
with constants depending only on the ellipticity and growth bounds on $a$ and $H$. In particular, integrating in $(x,v)$ shows exponential convergence of solutions of (1.1) to the equilibrium state, giving a PDE-based proof of hypocoercivity in this general setting.

More specifically, we prove that
\begin{thm}
\label{t.nasharonson}
Let $P(t,x,v,s,y,w)$ be the Green function for \eref{hypoeq}. Then there exists a constant $C>0$ such that for $A> \frac{256(4+2\norm{\nabla H}^2_\infty)}{3}$ and for all $t,s>0$ and  $x,y,v,w\in\Rd$,
$$P(t,x,v,s,y, w) \leq C (t-s)^{-2d}\exp\inp*{-\tfrac{3\abs{(x-y) +(t-s)w}^2}{A(t-s)^3} - \tfrac{32\abs{v-w}^2 +32\abs{w}^2}{3A(t-s)}} +C (t-s)^{-\frac{d}{2}}\exp\inp*{-\tfrac{\abs{x-y}^2}{8A(t-s)}} \,.$$
\end{thm}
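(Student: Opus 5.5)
We would follow Davies' exponential-weight (perturbation) method, combined with a Nash-type $L^1\to L^\infty$ ultracontractivity argument, run separately in the two regimes.

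\textbf{Reduction to weighted semigroup bounds.} Fix $s$ and a linear weight $\psi(x,v)=\alpha\cdot x+\beta\cdot v$. Set $f_\psi=e^{\psi}f$, where $f$ solves \eref{hypoeq}. Then $f_\psi$ solves a conjugated equation whose drift acquires lower-order terms linear in $(\alpha,\beta)$, of the form $\a\beta\cdot\nabla_v$, $\beta\cdot\a\beta$, $v\cdot\alpha$ and $\nabla H\cdot\beta$. We first prove an $L^2$ energy estimate for $f_\psi$, testing against $f_\psi$ with the Gaussian reference measure $e^{-|v|^2/2-H}$ in which the unforced operator is dissipative. The transport term $v\cdot\nabla_x$ is antisymmetric and drops out. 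The friction term is controlled by \eref{coefbnd} and ellipticity. The weight terms produce a growth rate $\lesssim |\beta|^2(1+\norm{\nabla H}_\infty^2)+|\alpha||\beta|$. This is where the numerical threshold on $A$ originates, through the factor $4+2\norm{\nabla H}^2_\infty$.

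\textbf{Short-time regime.} Here we make the weight time dependent, $\psi_t=\alpha\cdot x+(\beta+(t-s)\alpha)\cdot v$, following the Kolmogorov characteristics. This cancels the dangerous cross term from $v\cdot\nabla_x\psi$ and leaves an $L^2\to L^2$ growth of order $\exp(C(|\beta|^2\tau+|\alpha|^2\tau^3))$, with $\tau=t-s$. We combine this with a kinetic Nash inequality (hypoelliptic $L^1\to L^2$ smoothing of order $\tau^{-d}$, so $\tau^{-2d}$ for $L^1\to L^\infty$ after duality and Chapman--Kolmogorov), which we take from the constant-coefficient Kolmogorov comparison or the earlier regularity results. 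This gives
$$P\le C\tau^{-2d}\exp\big(\psi_s(y,w)-\psi_t(x,v)+C(|\beta|^2\tau+|\alpha|^2\tau^3)\big).$$
Optimizing over $\alpha,\beta$ yields the first Gaussian, with the distances $|(x-y)+\tau w|^2/\tau^3$ and $|v-w|^2/\tau$. The extra $|w|^2/\tau$ term comes from the friction term $v\cdot\a\nabla_v$, which shifts characteristics and forces us to absorb $|w|^2$ into the quadratic optimization.

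\textbf{Long-time regime.} We take $\beta=0$ and apply a pure $x$-weight $e^{\alpha\cdot x}$. The cross term $v\cdot\alpha$ is not small, so we use a hypocoercive (velocity-averaged) energy: we control $\int v\cdot\alpha f_\psi^2$ via the $v$-Poincaré inequality in the Gaussian measure plus a macroscopic corrector. Equivalently, we solve the cell problem $-\a v+\ldots$ for the $v$-mean, producing effective diffusion in $x$ with growth rate $\lesssim A|\alpha|^2$. Together with Nash decay $\tau^{-d/2}$ for the $x$-marginal (the velocity is equilibrated after time $O(1)$, so the short-time bound at $\tau=1$ gives $L^1\to L^\infty$ in $v$), optimizing in $\alpha$ gives $\tau^{-d/2}\exp(-|x-y|^2/(8A\tau))$. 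Summing the two regime bounds gives the stated estimate, since each term dominates in its regime and is harmless in the other.

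\textbf{Main obstacle.} The main obstacle is the long-time weighted energy estimate: showing that the $\alpha\cdot v$ term costs only $O(|\alpha|^2)$ rather than $O(|\alpha|)$ growth. We would first try a modified energy $\norm{f_\psi}^2+\epsilon\langle \nabla_x\Pi f_\psi,\, v\,(\text{micro part})\rangle$ (a Villani/Dolbeault--Mouhot--Schmeiser-type functional), and tune $\epsilon$ against $A$.
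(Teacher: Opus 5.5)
The step that cannot be carried out is the one producing the factor $\exp\bigl(-\tfrac{32\abs{w}^2}{3A\tau}\bigr)$, with $\tau=t-s$, in the short-time Gaussian.

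\textbf{Why your method cannot produce it.} Davies' method with your weights $\psi_t=\alpha\cdot x+(\beta+(t-s)\alpha)\cdot v$ gives, after optimizing over $(\alpha,\beta)$, an exponent depending only on $x-y+\tau v$ and $v-w$. Nothing in that optimization yields decay in $\abs{w}$ itself. Friction does not supply it either. In Lebesgue measure the twisted friction term is $\int f_\psi^2\,v\cdot\mathbf{a}(\beta+(t-s)\alpha)$, which is unbounded in $v$. Its effect is to shift the characteristic by $O(\tau\abs{v})$, and that costs a factor of order $e^{C\tau\abs{v}^2}$ relative to the free Kolmogorov bound rather than giving a gain.

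\textbf{No argument can produce it, because the bound is false.} Take $\mathbf{a}=I$, $H=0$. Then $P(t,x,v,s,\cdot,\cdot)$ is the density of the Gaussian vector $(X_\tau,V_\tau)$ solving $dX=V\,dt$, $dV=-V\,dt+\sqrt2\,dW$ from $(x,v)$. At $v=w$, $\abs{w}=1$, $y=x+\tau w$, its mean is off by $O(\tau)$ in velocity and $O(\tau^2)$ in position. The covariance quadratic form there is $O(\tau)$, so $P\asymp\tau^{-2d}$ in either normalization of the measure, since $\abs{w}=1$. The right-hand side, however, equals $C\tau^{-2d}e^{-32/(3A\tau)}+C\tau^{-d/2}e^{-\tau/(8A)}=o(\tau^{-2d})$ as $\tau\to0$.

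The paper obtains this term from the spatially constant factor $e^{32\abs{w}^2/(3At)}$ in $\psi_w$, whose time derivative supplies $-\tfrac{32\abs{w}^2}{3At^2}\psi_w^2$. But for $w\neq0$ that factor makes $t^{2d+1}\int\psi_w^2f^2\to+\infty$ as $t\to0$. Integrating $\partial_t\bigl(t^{2d+1}\int\psi_w^2f^2\bigr)\le C$ from $0$ therefore gives nothing, and imitating it will not rescue your argument.

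\textbf{The long-time equilibration step also fails.} Your claim is that ``the velocity is equilibrated after time $O(1)$, so one gets $L^1\to L^\infty$ in $v$.'' But a particle starting at speed $\abs{v}$ needs time $\sim\log\abs{v}$ to forget it, and meanwhile travels a distance $\approx\abs{v}$. With $\mathbf{a}=I$, $H=0$, $\tau=2\log\abs{v}$, $y=x+v(1-e^{-\tau})$, $w=ve^{-\tau}$, the kernel is $\asymp\tau^{-d/2}$. Both terms on the right are $o(\tau^{-d/2})$ as $\abs{v}\to\infty$, so no velocity-free bound holds uniformly in the initial velocity.

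\textbf{Smoothing in $dm$ fails as well.} Relative to $dm$ the kernel carries the factor $1/\gamma(w)$. Evaluating at the peak $w=ve^{-\tau}$ and letting $\abs{v}\to\infty$ shows it is unbounded at every fixed time. Hence no $L^1(dm)\to L^\infty$ smoothing can hold, neither the $\tau^{-2d}$ nor the $\tau^{-d/2}$ you posit. This is the kinetic form of the Ornstein--Uhlenbeck semigroup being hypercontractive but not ultracontractive.

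Davies' method would in any case need such smoothing for the twisted semigroup $e^{\psi}Te^{-\psi}$. That does not follow from untwisted smoothing plus twisted $L^2$ growth. For rough $\mathbf{a}$ there is also no constant-coefficient comparison to take it from. The paper instead derives its diagonal decay from Nash inequalities built on Poincar\'e inequalities over boxes, $Q_{R^3}\times Q_R$ in \pref{newnash}.

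\textbf{The obstacle you single out is not one.} The hypocoercive functional you propose for the long-time $\alpha\cdot v$ term is unnecessary. Since $v\,d\gamma=-\nabla_v\,d\gamma$, one has $\int(\alpha\cdot v)f_\psi^2\,dm=2\int f_\psi\,\alpha\cdot\nabla_vf_\psi\,dm$, and the dissipation absorbs this at cost $O(\abs{\alpha}^2)$. This is exactly the paper's identity $\int f^2\psi\,v\cdot\nabla\psi\,dm=2\int f\psi\nabla\psi\cdot\nabla_vf\,dm$. It turns the $x$-weighted estimate into the parabolic inequality $\partial_t\int\psi^2f^2\,dm\le2\int f^2(\psi\partial_t\psi+\abs{\nabla\psi}^2)\,dm$.

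Two smaller points. Your growth rate $\abs{\beta}^2(\cdots)+\abs{\alpha}\abs{\beta}$ omits this $\abs{\alpha}^2$ term. And in $dm$ the friction is part of the symmetric form, so \eref{coefbnd} plays no role there; the paper needs it only in Lebesgue measure.
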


\subsection{Related work.}
There is a substantial literature on fundamental solutions and Gaussian estimates for degenerate Kolmogorov-Fokker-Planck operators. This line of investigation originates with the classical Kolmogorov equation with constant coefficients:
\begin{equation}
\label{e.kolmeq}
\partial_t f -\Delta_v f - v\cdot\nabla_xf = 0
\end{equation}
Kolmogorov \cite{K} and H\"ormander \cite{H} not only computed the explicit Gaussian kernel for this equation but also connected these explicit formulas to the notions of hypoellipticity, thereby establishing the foundational framework for later developments.

When the coefficients become variable, the problem takes on a new layer of complexity. Pascucci and Polidoro \cite{Pasc1, Pasc2} extended the theory by developing Aronson-type Gaussian upper bounds and Nash estimates for a broad class of ultraparabolic operators in divergence form, working under H\"ormander-type structural conditions.  Lanconelli and Pascucci \cite{Lanc} later extended Nash-Aronson bounds to non-homogeneous Kolmogorov equations, thus widening the applicability of these classical estimates to more intricate kinetic models.

Degenerate Kolmogorov equations with rough coefficients present another significant challenge. Anceschi and Rebucci \cite{AR} addressed this by constructing an explicit fundamental solution and proving Gaussian bounds, taking advantage of the dilation-invariant structure inherent to the associated Lie groups. Building upon this foundation, Auscher et al. \cite{Aus} recently introduced a general $H^1_\hyp$ well-posedness theory, which provides a robust analytic framework for Kolmogorov-Fokker-Planck equations with rough coefficients, and in turn yields Gaussian upper estimates.

Similar results have been proven from the probabilistic perspective. Zhang \cite{FS} employed the tools of Malliavin calculus to establish the existence of smooth densities, which serve as fundamental solutions, for kinetic Fokker-Planck operators exhibiting anisotropic nonlocal dissipativity. Leli\`evre, Ramil, and Reygner \cite{Lelievre} studied the absorbed Langevin process in cylindrical domains, proving the existence and regularity of its transition density and deriving an explicit Gaussian upper bound.

Further investigations have explored the effects of boundaries and geometric constraints in kinetic Fokker-Planck equations. Existence and regularity results in this context have been obtained by Carrillo \cite{Car} for Vlasov-Poisson-Fokker-Planck systems, by Nier \cite{Nierbook} in a geometric setting, and more recently by Silvestre \cite{S}, Zhu \cite{Z}, Bernard \cite{Ber}, among others. Finally, the long-time behavior of kinetic Fokker-Planck dynamics, often referred to as hypocoercivity, has been analyzed through both probabilistic and analytic techniques. Probabilistic approaches have been developed by Rey-Bellet and Thomas \cite{RBT}, while analytic treatments can be found in the works of Herau \cite{H1}, Villani \cite{V}, Mischler and Mouhot \cite{Misc}, Baudoin \cite{baudoin}, and others, including the contributions of Hwang et al. \cite{Hwang}.

In contrast to these works, we focus on the forced kinetic Fokker-Planck equation (1.1) with a general time and space dependent diffusion matrix
$\a(t,x,v)$ satisfying the mild growth constraint 
$$\abs{v\cdot\nabla_v\cdot \a}\leq\Lambda,$$
 and a bounded, differentiable potential $H$. Our approach is variational: we work in the weighted kinetic Sobolev spaces introduced by Albritton-Armstrong-Mourrat-Novack\cite{AAMN} and adapt the classical Nash-Aronson method to this hypoelliptic setting by exploiting an extended energy form inspired by the constant-coefficient Kolmogorov kernel.

\subsection{Explanation of the Main Result.}
This main result is formulated as a single bound, but is most effectively thought of as two separate estimates in differing time regimes. In section 3, when proving this result, we first prove that for $t-s\geq 1$ and $A$ as above,
\begin{equation}
\label{e.longtimebound}
P(t,x,v,s,y, w) \leq C (t-s)^{-\frac{d}{2}}\exp\inp*{-\frac{\abs{x-y}^2}{8A(t-s)}}\, ,
\end{equation}
and then prove that
\begin{equation}
\label{e.shorttimebound}
P(t,x,v,s,y, w) \leq C (t-s)^{-2d}\exp\inp*{-\frac{3\abs{(x-y) +(t-s)w}^2}{A(t-s)^3} - \frac{32\abs{v-w}^2 +32\abs{w}^2}{3A(t-s)}}\, ,
\end{equation}
for $t-s \leq 1$.

This approach mirrors the expected homogenization result that inspired this paper, especially in the long time regime. In this regime, the friction terms, given by the operator $v\cdot\a\nabla_v + \nabla H \cdot\nabla_v$, contribute on the same order as the other components of the equation. We demonstrate that, after averaging over the $v$-variable, the fundamental solution behaves like the heat kernel, which is consistent with the homogenization of the associated stochastic differential equation (SDE), as seen in \cite{HP1, HP2}. In this setting, weighting the energy estimates by a Gaussian in the velocity variable becomes a key step, enabling us to obtain estimates analogous to the standard parabolic Nash-Aronson upper bound. 

This builds upon the ideas presented in \cite{AAMN} to prove the existence of solutions to equations on the form \eref{hypoeq}. The Gaussian weight in the $v$ variable and the definition of the $H^1_\hyp$ space prove to be the correct formulations to replicate the self-adjointedness properties of more standard parabolic equations in this more general case. In fact, the proof of \eref{longtimebound} is similar to the proof of the Nash-Aronson upper bound for parabolic equations. 

A notable aspect of the long-time bound is the absence of any explicit $v$-dependence on the right-hand side. This is due, in part, to the way we define the fundamental solution. Specifically, when constructing a solution to \eref{hypoeq} with initial data $g\in L^2(\Rd; L^2_\gamma)$, the convolution is taken with respect to the measure $dxd\gamma(v)$, where we define the $\gamma$ measure in the following section, which incorporates the $v$-dependence indirectly. In effect, our estimate implies that a solution to \eref{hypoeq} with initial data $g$ can be bounded by the scaled heat kernel given in \eref{longtimebound}, convolved with the Gaussian average of $g$. The influence of $v$ is therefore captured through the Gaussian measure, which aligns our expectation derived from the expected homogenization behavior for hypoelliptic equations.

Over shorter time scales, when the support of the initial data for the solution to \eref{hypoeq} is sufficiently small, we demonstrate that the friction terms act in a more perturbative manner. In this regime, the diffusion and transport components dominate, defining the primary component of the equation. Consequently, the fundamental solution closely resembles that of equation \eref{kolmeq}.

To leverage this observation in proving \eref{shorttimebound}, we redefine our notion of weak solution, omitting the Gaussian weight and instead requiring some decay in the $v$-variable within the function itself. After making this adjustment, we then re-derive several functional inequalities used in proving \eref{longtimebound}. Once we establish the framework for this new notion of solution, proving \eref{shorttimebound} proceeds, with some additional computations, similarly to the long-time case. Notably, this is the only point in the proof where \eref{coefbnd} is applied, as we use this bound to control one of the error terms that arises as a consequence of this perturbative approach.

\section{Functional Preliminaries and Inequalities}
\subsection{Long time function spaces and weak solutions}
We begin this section by discussing the functional framework that we will use when proving the long time estimate in \tref{nasharonson}. In this regime, we mostly rely on the functional analysis framework for solutions of \eref{hypoeq} developed throughout \cite{AAMN}. As most of the function spaces and notation used are non standard, we take this subsection to review the crucial parts of this theory.

To begin, we define the measures
\begin{equation*}
\left\{  
\begin{aligned}
& d\sigma(x) := \exp\left( -H(x) \right) \,dx\,, \\
& d\gamma(v):= (2\pi)^{-\frac{d}{2}} e^{ -\frac{\abs{v}^2}{2}} \,dv\,, \\
& dm(x,v) := d\sigma(x) d\gamma(v)\,, \\
& dm_t(t,x,v) := dtd\sigma(x) d\gamma(v)\,.
\end{aligned}
\right.
\end{equation*}
Note that, since $H$ is bounded, $\sigma$ is equivalent to the Lebesgue measure. More precisely, if $\abs{H}\leq \Lambda$, there are positive constants $C(\Lambda), c(\Lambda) <\infty$ such that
$$c(\Lambda) \int \abs{f} \,dx \leq \int \abs{f}\,d\sigma \leq C(\Lambda) \int \abs{f}\,dx\,.$$
For $V\subset \R\times\Rd$, we define the space $H^1_\kin(V)$ by
\begin{equation*}  
H^1_\kin(V) := \insb*{ f \in L^2\left(V;H^1_\gamma\right) \ : \ \partial_t f -v \cdot \nabla_x f \in L^2(V;H^{-1}_\gamma)}\,,
\end{equation*}
and it is a Banach space with respect to the norm
\begin{equation*}
\norm{f}_{H^1_\kin(V)} ^2
:= \norm{f}_{L^2(V;H^1_\gamma)}^2
+ 
\left\| \partial_t f - v \cdot \nabla_x f \right\|_{L^2(V;H^{-1}_\gamma)}^2 \,.
\end{equation*}
We also define the seminorm
\begin{equation*}
\snorm{f}^2_{H^1_\kin(V)} 
:=
\norm{\nabla_v f}_{L^2(V;L^2_\gamma)}^2
+ 
\norm{ \partial_t f - v \cdot \nabla_x f }_{L^2(V;H^{-1}_\gamma)}^2 
\end{equation*}
so that we have $\left\| f \right\|_{H^1_\kin(V)}^2 = \left\llbracket f \right\rrbracket_{H^1_\kin(V)}^2 + \left\| f \right\|_{L^2(V;L^2_\gamma)}^2$.

We say that $f \in H^1_\kin(V)$ is a \emph{weak solution} of the equation
\begin{equation*}
\partial_t f-\nabla_v\cdot\a\nabla_v f + v \cdot \a\nabla_v f - v \cdot \nabla_x f + \nabla H(x) \cdot \nabla_v f = 0 \quad \mbox{in} \ V \times \Rd
\end{equation*}
provided that, for all $g\in L^2(V; H^1_\gamma)$
\begin{equation}
\label{e.weaksoldef1}
\int_{V\times \Rd} 
\nabla_vg\cdot \a\nabla_v f \,dm_t
=
\int_{V\times \Rd} 
g\left(
v\cdot \nabla_xf
- \partial_t f
- \nabla H\cdot \nabla_vf \right)\,dm_t\,.
\end{equation}

Now that we have defined the major function space we will be working with, we state a few inequalities satisfied by elements of this space. 
\begin{prop}[{Poincar\'e inequality for $H^1_\kin$~\cite[Proposition 6.2]{AAMN}}]
\label{p.hypoelliptic.poincare}
Fix a bounded $C^{1,1}$ domain~$V\subseteq\R\times\Rd$. There exists~$C(d,V)<\infty$ such that, for every $f \in H^1_{\kin} (V)$, 
\begin{equation*} 
\left\| f - (f)_V \right\|_{L^2 (V; L^2_\gamma )} 
\leq
C \left\llbracket f \right\rrbracket_{H^1_\kin(V)}\,.
\end{equation*}
Moreover, if in addition we have $f\in H^1_{\kin,0}(V)$, then
\begin{equation*} 
\left\| f \right\|_{L^2(V;L^2_\gamma)} 
\leq
C \left\llbracket f \right\rrbracket_{H^1_\kin(V)}\,.
\end{equation*}
\end{prop}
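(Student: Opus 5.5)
The proof follows the strategy of \cite{AAMN}, where this inequality is established. The core idea is to obtain the $x$-regularity missing from the $H^1_\kin$ seminorm through the hypoelliptic structure. We first treat the zero-boundary case $f\in H^1_{\kin,0}(V)$, arguing by compactness and contradiction. Suppose there are $f_n$ with $\norm{f_n}_{L^2(V;L^2_\gamma)}=1$ and $\snorm{f_n}_{H^1_\kin(V)}\to0$. The key ingredient is a compact embedding $H^1_\kin(V)\hookrightarrow L^2(V;L^2_\gamma)$, a kinetic averaging / H\"ormander-type regularity statement.

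To prove this embedding, we show that $f\in H^1_\kin$ has a fractional amount of regularity in $(t,x)$. Write $\partial_t f - v\cdot\nabla_x f = \nabla_v^* F_1 + F_0$ with $F_i\in L^2(V;L^2_\gamma)$, which is the $H^{-1}_\gamma$ duality in the Gaussian setting, where $\nabla_v^* = -\nabla_v + v$. After extending by zero (or with a cutoff) to all of $\R\times\R^{2d}$, we use the commutator identity $[\nabla_v,\partial_t - v\cdot\nabla_x]=-\nabla_x$. Then either Fourier analysis in $(t,x)$ or an averaging lemma, combined with the $L^2_\gamma$-control of $\nabla_v f$, yields $\norm{\abs{\nabla_x}^{1/3}f}+\norm{\abs{\partial_t}^{1/2}f}\lesssim \norm{f}_{H^1_\kin}$. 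The Gaussian weight requires care, because $v\cdot\nabla_x$ is unbounded in $v$. We handle this by truncating in $|v|\le R$ and controlling the tail through the Gaussian Poincar\'e inequality and the $H^1_\gamma$ bound, which gives uniform smallness of the mass in $\{|v|>R\}$.

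With compactness in hand, a subsequence satisfies $f_n\to f$ in $L^2(V;L^2_\gamma)$ and weakly in $H^1_\kin$. Lower semicontinuity gives $\nabla_v f=0$ and $\partial_t f - v\cdot\nabla_x f=0$. Since $f$ is independent of $v$, the second identity forces $\partial_t f=0$ and $\nabla_x f=0$, because $v$ ranges over all of $\Rd$. Hence $f$ is constant on the connected domain $V$. In the mean-zero case this contradicts $\norm{f}=1$ together with $(f)_V=0$. In the case $f\in H^1_{\kin,0}(V)$, the zero boundary trace forces $f=0$, which contradicts $\norm{f}=1$.

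To make sense of the trace, we need that $H^1_{\kin,0}$ is closed under the limit. This follows from defining it as the closure of smooth compactly supported functions, or of functions vanishing on the hypoelliptic boundary, together with weak closedness. The $C^{1,1}$ regularity of $\partial V$ enters in two places: in constructing a bounded extension operator $H^1_\kin(V)\to H^1_\kin(\R^{1+d}\times\Rd)$, and in guaranteeing that the space is well defined. The main obstacle is this compactness step, namely the extension across a general $C^{1,1}$ boundary compatible with the transport field, combined with the averaging-lemma gain. I would first try a reflection across $\partial V$ in $(t,x)$, with a compensating reflection in $v$ so that the transport operator is preserved. Failing that, I would cite \cite[Proposition 6.2]{AAMN} directly for these ingredients.
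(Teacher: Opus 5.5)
Your route (kinetic compactness by contradiction) differs from the one the paper relies on, and its load-bearing step is not established.

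\textbf{The gap: compactness up to $\partial V$.} The paper only cites \cite{AAMN}, but it reproduces the argument for the unweighted analogue in \pref{newpoincare}. That argument is constructive and uses no kinetic compactness, no averaging lemma, and no extension across $\partial V$. Your argument instead needs compactness of $H^1_\kin(V)\hookrightarrow L^2(V;L^2_\gamma)$ \emph{up to} $\partial V$, and nothing in the proposal delivers it.

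The cutoff version of your averaging argument only gives interior compactness. (For $\chi\in C^\infty_c(V)$ the commutator $f(\partial_t\chi-v\cdot\nabla_x\chi)$ lies in $L^2(V;L^2_\gamma)$, since $\norm{\abs{v}f}_{L^2_\gamma}\le C\norm{f}_{H^1_\gamma}$.) Interior compactness does not help the contradiction: the unit mass of $f_n$ could a priori concentrate in a shrinking layer at $\partial V$, and excluding this is the whole difficulty.

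The three extensions you mention all fail. Extension by zero produces a boundary measure of the form $(n_t-v\cdot n_x)f\,\delta_{\partial V}$, which is not in $L^2(H^{-1}_\gamma)$ because the negative norm acts only in $v$. The proposed reflection fails for a related reason. On a flat piece $\{x_1=0\}$, the map $(x_1,v_1)\mapsto(-x_1,-v_1)$ does preserve $\partial_t-v\cdot\nabla_x$. However, the two one-sided traces are then $f(t,0,x',v)$ and $f(t,0,x',-v_1,v')$. Their mismatch creates the singular term $v_1\bigl(f(t,0,x',v)-f(t,0,x',-v_1,v')\bigr)\delta_{\{x_1=0\}}$, which vanishes only under specular boundary conditions. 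Reflecting in $(t,x)$ alone matches the traces, but it turns the transport into $\partial_t-(-v_1,v')\cdot\nabla_x$, i.e.\ it adds $2v_1\partial_{x_1}f$, which $\snorm{f}_{H^1_\kin(V)}$ does not control.

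Falling back on \cite[Proposition 6.2]{AAMN} ``for these ingredients'' is circular, since that is the statement being proved. Separately, in the zero-boundary case, ``the trace forces $f=0$'' presupposes a trace theorem for $H^1_\kin(V)$ that the proposal does not supply.

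\textbf{The missing idea.} Compactness is never needed for kinetic functions, only for functions of $(t,x)$. The argument has three steps.

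First, the Gaussian Poincar\'e inequality in $v$ gives $\norm{f-\inprod{f}_\gamma}_{L^2(V;L^2_\gamma)}\le\norm{\nabla_vf}_{L^2(V;L^2_\gamma)}$, where $\inprod{f}_\gamma=\int f\,d\gamma$.

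Second, for $\phi\in C^\infty_c(V)$, test with $\xi_i(v)=v_i$ (so $\int\xi_i\,d\gamma=0$ and $\int v_j\xi_i\,d\gamma=\delta_{ij}$) and with $\xi_0=1$. Split $f=\inprod{f}_\gamma+(f-\inprod{f}_\gamma)$ in $\int\xi_i(\partial_t-v\cdot\nabla_x)\phi\,f$, exactly as in the proof of \pref{newpoincare}. This yields $\norm{\nabla_{t,x}\inprod{f}_\gamma}_{H^{-1}(V)}\le C\snorm{f}_{H^1_\kin(V)}$.

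Third, apply the Ne\v{c}as inequality $\norm{g-(g)_V}_{L^2(V)}\le C(V)\norm{\nabla_{t,x}g}_{H^{-1}(V)}$ on the Lipschitz domain $V$ to $g=\inprod{f}_\gamma$. Compactness is hidden only in this classical statement about functions of $(t,x)$, where the boundary causes no trouble.
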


As we are mostly focused on studying solutions to \eref{hypoeq} in all of $\Rd$, we make use of the following extension of \pref{hypoelliptic.poincare} that applies independently of the domain.
\begin{prop}[Nash Inequality for $H^1_\kin$]
\label{p.nashineqkin}
For $f\in H^1_\kin(\Rd)$, there exists $C(d)<\infty$ such that, for $t\geq 0$
$$\int_{(t, t+1)\times\Rd\times\Rd} f^2 \, dm_t \leq C\max\insb*{\snorm{f}_{H^1_\kin((t,t+1)\times\Rd)}^{\frac{2d}{d+2}}\norm{f}_{L^1((t,t+1)\times\Rd; L^1_\gamma)}^{\frac{4}{d+2}}\;,\; \snorm{f}_{H^1_\kin((t,t+1)\times\Rd)}^2} \, .$$
\end{prop}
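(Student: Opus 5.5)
The plan is to prove the Nash inequality by the classical Fourier-splitting (or heat-semigroup) argument. The Poincar\'e inequality of \pref{hypoelliptic.poincare} serves as the local ingredient, and the $\gamma$-variable is handled separately from $x$. Write $Q=(t,t+1)\times\Rd$ and decompose $f = (f - \bar f) + \bar f$, where $\bar f(s,x) := \int f(s,x,v)\,d\gamma(v)$ is the Gaussian velocity average. The Gaussian Poincar\'e inequality in $v$ gives
\begin{equation*}
\norm{f-\bar f}^2_{L^2(Q;L^2_\gamma)} \leq \norm{\nabla_v f}^2_{L^2(Q;L^2_\gamma)} \leq \snorm{f}^2_{H^1_\kin(Q)}\,,
\end{equation*}
so it suffices to control $\norm{\bar f}_{L^2(Q)}^2$. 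For this I will use a mollification in $x$ at scale $r$: write $\bar f = \bar f_r + (\bar f - \bar f_r)$, where $\bar f_r = \bar f *_x \eta_r$. The mollified piece is bounded via
\begin{equation*}
\norm{\bar f_r}^2_{L^2}\le \norm{\bar f_r}_{L^\infty}\norm{\bar f}_{L^1}\lesssim r^{-d}\norm{f}^2_{L^1(Q;L^1_\gamma)}\,,
\end{equation*}
using $\norm{\bar f(s,\cdot)}_{L^1_x}\le \norm{f(s)}_{L^1}$ (with time integrated over an interval of length one). Here the $L^\infty$ bound should really be taken pointwise in time and then integrated; to make this work I will use an $x$–$t$ mollifier, or else accept a time-averaged $L^1$ bound, which suffices since the interval has unit length.

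The main step, and the expected obstacle, is bounding the oscillation $\norm{\bar f-\bar f_r}_{L^2(Q)}^2 \lesssim r^2\,\snorm{f}^2_{H^1_\kin(Q)}$ for $r\le 1$. The seminorm contains no $\nabla_x$ derivative: $x$-regularity only comes from the transport part $\partial_t f - v\cdot\nabla_x f\in L^2H^{-1}_\gamma$ combined with $\nabla_v f\in L^2$. I will get it from the hypoelliptic Poincar\'e inequality of \pref{hypoelliptic.poincare} applied on kinetic cylinders. Cover $Q$ by translates of the rescaled cylinders $(s,s+r^2)\times B_r(x_0)$ in $(t,x)$ (with the $v$ variable unrestricted, since the spaces are weighted by $\gamma$). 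On each cylinder, I plan to invoke \pref{hypoelliptic.poincare} on a fixed $C^{1,1}$ domain $V$ of unit size, after the kinetic rescaling $(t,x,v)\mapsto(r^{-2}t,r^{-3}x,r^{-1}v)$. This rescaling interacts badly with the Gaussian weight, and that is where care is needed. A cleaner route for $r\le1$ is to apply the proposition on unit-size cylinders only. That yields $\norm{f-(f)_{V}}^2\le C\snorm{f}^2$ on each unit cylinder; summing over a partition of $\Rd$ into unit cubes gives $\norm{f-\Pi f}^2_{L^2(Q)}\le C\snorm{f}^2_{H^1_\kin(Q)}$, where $\Pi f$ is the piecewise-constant cube average. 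Since $\Pi f$ is constant on unit cubes, its $L^2$ norm is directly bounded by its $L^1$ norm: $\norm{\Pi f}_{L^2}^2\le \norm{\Pi f}_{L^\infty}\norm{\Pi f}_{L^1}\le \norm{f}_{L^1}^2$.

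This gives $\norm{f}_{L^2}^2\le C(\snorm{f}^2+\norm{f}_{L^1}^2)$, which is the scale-$1$ version. To recover the exact interpolation exponent, I will use the scaling structure. If $\snorm{f}\ge \norm{f}_{L^1}$, the unit-scale bound already gives $\norm{f}^2\lesssim\snorm{f}^2$, the second term of the max. If instead $\snorm{f}< \norm{f}_{L^1}$, I will redo the cube partition at spatial scale $R\ge1$: only $x$ is rescaled, since the time interval is fixed at length one. The Poincar\'e constant on $(t,t+1)\times B_R$ then grows like $R$, because transport converts a unit of time into a spatial displacement only of order $|v|\sim 1$. This yields
\begin{equation*}
\norm{f}_{L^2}^2\lesssim R^2\snorm{f}^2+R^{-d}\norm{f}^2_{L^1}\,.
\end{equation*}
Optimizing at $R=(\norm{f}_{L^1}/\snorm{f})^{2/(d+2)}\ge1$ gives exactly $\snorm{f}^{2d/(d+2)}\norm{f}_{L^1}^{4/(d+2)}$.

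The genuinely delicate point is justifying the Poincar\'e constant $\lesssim R$ on the wide, short cylinders $(t,t+1)\times B_R\times\Rd$. I would first try to obtain it by chaining: apply \pref{hypoelliptic.poincare} on unit cubes, then control differences of averages of adjacent cubes by $\snorm{f}$ through an overlapping-cube argument, which gives a discrete gradient. The discrete Poincar\'e inequality on the grid of $R^d$ cubes then costs a factor of $R$.
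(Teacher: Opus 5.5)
Your final argument is exactly the paper's proof, and it is correct: partition $\Rd$ into cubes of side $R$, apply \pref{hypoelliptic.poincare} on each $(t,t+1)\times Q_R(y)\times\Rd$ with a constant of order $1+R$, bound the cube averages by $R^{-d}\norm{f}^2_{L^1}$, and optimize in $R$ with the case split at $\norm{f}_{L^1}=\snorm{f}$ (the opening mollification/Fourier-splitting detour is not needed). Where you go beyond the paper is in justifying the $R$-growth of the Poincar\'e constant, which the paper simply asserts: your chaining of unit-cube Poincar\'e inequalities through a discrete Poincar\'e inequality on the grid is sound, since the seminorm is local in $(t,x)$ and the overlaps therefore sum, and using the $R=1$ bound when $\snorm{f}\ge\norm{f}_{L^1}$ spares you the paper's use of cubes with $R<1$.
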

\begin{proof}
For any $R>0$, we can apply \pref{hypoelliptic.poincare} see that 
\begin{align*}
\int_{(t,t+1)\times\Rd\times\Rd} f^2 \,dm_t &= \sum_{y\in R \Zd} \int_{(t,t+1)\times Q_{R}(y)\times\Rd} f^2 \,dm_t \\
&\leq \sum_{y\in R\Zd}C \norm{\nabla_v f}_{L^2((t,t+1)\times Q_R(y) ; L^2_\gamma)}^2 \\
&\qquad+ C(1+R^2) \norm{(\partial_t - v\cdot \nabla_x )f}^2_{L^2( (t,t+1)\times Q_R(y); H^{-1}_\gamma)}\\ 
&\qquad+ CR^d\inp*{\dashint_{(t, t+1)\times Q_R(y) \times \Rd} f\,dm_t}^2 \\
&\leq C(1+R^2) \snorm{f}_{H^1_\kin((t,t+1)\times \Rd)}^2 + CR^{-d}\norm{f}^2_{L^1((t,t+1)\times\Rd; L^1_\gamma)} \,.
\end{align*}
Now, to optimize this inequality we choose $R= \snorm{f}_{H^1_\kin((t,t+1)\times \Rd)}^{-\frac{2}{d+2}}\norm{f}_{L^1((t,t+1)\times\Rd; L^1_\gamma)}^\frac{2}{d+2}$. Then, if $R\geq 1$, the above inequality reduces to 
$$\int_{(t,t+1)\times\Rd\times\Rd} f^2 \,dm_t \leq C \snorm{f}_{H^1_\kin((t, t+1)\times\Rd)}^{\frac{2d}{d+2}} \norm{f}_{L^1((t,t+1)\times\Rd; L^1_\gamma)}^{\frac{4}{d+2}}.$$
On the other hand, if $R\leq1$, then $\norm{f}_{L^1((t,t+1)\times\Rd; L^1_\gamma)} \leq \snorm{f}_{H^1_\kin((t,t+1)\times \Rd)}$. Using this bound, we then see that
\begin{align*} 
\int_{(t,t+1)\times\Rd\times\Rd} f^2 \,dm_t &\leq C\snorm{f}_{H^1_\kin((t,t+1)\times \Rd)}^2 + C\snorm{f}_{H^1_\kin((t,t+1)\times \Rd)}^{\frac{2d}{d+2}}\norm{f}^{2- \frac{2d}{d+2}}_{L^1((t,t+1)\times\Rd; L^1_\gamma)} \\
&\leq C\snorm{f}_{H^1_\kin((t,t+1)\times \Rd)}^2\,.
\end{align*}
\end{proof}

\subsection{Short time function spaces and weak solutions}
While for longer times, the friction and diffusion terms operate on the same scale, this is not the case in shorter time with initial data supported in a small set. In particular, in order to study the short time behavior of the fundamental solution, we show that we want to bound solutions to the following equation:
\begin{equation}
\label{e.smallteq}
\begin{cases}
 \partial_t f - \nabla_v\cdot\a\nabla_v f + \ep^2v \cdot \a\nabla_v f - v \cdot \nabla_x f + \ep\nabla H\left(\ep^{3} x \right) \cdot \nabla_v f = 0
& \text{ in } \R_+\times \Rd \times\Rd \\
 f(0,\cdot) = g(x,v) \,.
\end{cases}
\end{equation}
where $g\in C^\infty(B_1\times B_1)$, $g\geq 0$, and $\int g\,dxdv = 1$. 

To motivate this, note that defining $f_\ep(t,x,v) = \ep^{-4d}f(\ep^{-2}t, \ep^{-3}x, \ep^{-1} v)$, we see that $f_\ep$ solves the following equation:

\begin{equation*}
\begin{cases}
\partial_t f_\ep - \nabla_v\cdot\a\nabla_v f_\ep + v \cdot \a\nabla_v f_\ep - v \cdot \nabla_x f_\ep + \nabla H\left(x \right) \cdot \nabla_v f_\ep = 0
& \text{ in } \R_+\times \Rd \times\Rd \\
 f_\ep(0,x,v) = \ep^{-4d}g(\ep^{-3} x, \ep^{-1} v) \,,
\end{cases}
\end{equation*}
which, after sending $\ep\to 0$, gives the fundamental solution of this equation. Therefore, to prove bounds on the fundamental solution for short times, it suffices to prove bounds on solutions to \eref{smallteq} that are independent of $\ep$. Additionally, this suggests that for small time, we should be treating the $v\cdot\a\nabla_v$ term and the $\nabla H \cdot \nabla_v$ terms as perturbative additions to the Kolmogorov equation, given by
\begin{equation*}
\begin{cases}
\partial_t f - \nabla_v\cdot\a\nabla_v f - v\cdot\nabla_x f = 0 & \text{in} \R_+\times\Rd\times\Rd\\
f(0,x,v) = g(x,v)\,.
\end{cases}
\end{equation*}
With this in mind, we devote the rest of this subsection to describing the alternate structure needed to describe these equations in this way.

We first define $\mathbf{H}^1(V)$ be $H^1(V)$ endowed with the norm
$$\norm{f}_{\mathbf{H}^1(V)} = \norm{f}_{L^2(V)} + \norm{vf}_{L^2(V)} + \norm{\nabla_v f}_{L^2(V)}\, ,$$
where $V$ is a subset of $\Rd$. We then define, for $I\subset \R$ and $U,V\subset \Rd$,  $\mc{H}^1(I\times U \times V)$ to be the following space 
$$\mathcal{H}^1(I\times U\times V) = \{ f\in L^2(I\times U; \mathbf{H}^1(V)) : \norm{\partial_t f - v\cdot\nabla_x f }_{L^2(I\times U; \mathbf{H}^{-1}(V))} < \infty\}\,,$$
where $\mathbf{H}^{-1}(V)$ is the dual space of $\mathbf{H}^1(V)$.

Mirroring the previous section, we now state a Poincar\'e inequality for the space $\mathbf{H}^1(V)$. The proof is very similar to the one given for \pref{hypoelliptic.poincare} in \cite{AAMN}, but we reproduce it here anyway for the sake of completeness.
\begin{prop}[Poincar\'e Inequality for $\mc{H}^1$]
\label{p.newpoincare}
There exists some $C(I,U, V)<\infty$ such that for $f\in \mc{H}^1(I\times U\times V)$,
$$\norm{f - (f)_{I\times U\times V}}_{L^2(I\times U\times V)} \leq C\inp*{\norm{\nabla_v f}_{L^2(I\times U\times V)} + \norm{\partial_t f -v\cdot\nabla_x f}_{L^2(I\times U ; \mathbf{H}^{-1}(V))}}\, .$$
Furthermore, we see that, for $f\in \mc{H}^1(I\times Q_R\times Q_S)$, 
\begin{align*}
\norm{f - (f)&_{I\times Q_R\times Q_S}}_{L^2(I\times Q_R\times Q_S)} \\
&\leq CS\norm{\nabla_v f}_{L^2(I\times Q_R\times Q_S)} + C\inp*{1+\tfrac{R}{S^2}}\norm{\partial_t f -v\cdot\nabla_x f}_{L^2(I\times Q_R; \mathbf{H}^{-1}(Q_S))}\, .
\end{align*}
\end{prop}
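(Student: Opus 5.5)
The plan is to follow the strategy behind \pref{hypoelliptic.poincare} in \cite{AAMN}, replacing the Gaussian weight by the unweighted norms of $\mathbf{H}^1(V)$. Everything is then reduced to the unit-cube case by scaling. We argue by duality through the $L^2$ norm. Set $\tilde f = f-(f)_{I\times U\times V}$. We will construct a vector field $\mathbf{F}=(F_t,F_x,F_v)$ on $I\times U\times V$ satisfying
\begin{equation*}
\partial_t F_t + \nabla_x\cdot F_x + \nabla_v \cdot F_v = \tilde f \quad\text{(more precisely, a kinetic analogue of this)},
\end{equation*}
with the $H^1$ norms of the components controlled by $\norm{\tilde f}_{L^2}$. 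The natural choice, as in \cite{AAMN}, is to solve an auxiliary problem. First we find $w$ with $-\Delta_v w$ plus a lift in the $(t,x)$ variables so that
\begin{equation*}
\tilde f = \nabla_v\cdot G + (\partial_t - v\cdot\nabla_x)\phi ,
\end{equation*}
where $\phi\in L^2(I\times U;\mathbf{H}^1_0(V))$ and $G\in L^2$. These should satisfy $\norm{G}_{L^2}+\norm{\phi}_{L^2(I\times U;\mathbf{H}^1(V))}\le C\norm{\tilde f}_{L^2}$.

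We then test against $\tilde f$ and obtain
\begin{equation*}
\norm{\tilde f}_{L^2}^2 = -\int G\cdot\nabla_v f \;-\; \int \phi\,(\partial_t - v\cdot\nabla_x) f .
\end{equation*}
Here the mean-zero property lets us replace $\tilde f$ by $f$ in the pairings, and we use that $\phi$ vanishes on the parabolic boundary so the integration by parts produces no boundary terms. Bounding the first term by $\norm{G}\norm{\nabla_v f}$ and the second by $\norm{\phi}_{L^2\mathbf{H}^1}\norm{(\partial_t-v\cdot\nabla_x)f}_{L^2\mathbf{H}^{-1}}$, and then dividing by $\norm{\tilde f}$, gives the first inequality.

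The construction of $(G,\phi)$ is the main obstacle. We would build it in two steps. First we handle the $v$-average. Let $\bar f(t,x) = \dashint_V \tilde f\,dv$, and pick a fixed bump $\psi\in C_c^\infty(V)$ with $\int\psi=1$ and $\int v\psi = 0$. Then $\tilde f - \bar f\psi$ has zero $v$-mean for each $(t,x)$. It can therefore be written as $\nabla_v\cdot G_1$ by solving a Neumann problem in $v$ slice-by-slice, with $\norm{G_1}\le C\norm{\tilde f}$.

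Second, for the term $\bar f\psi$ we must use transport. Since $\bar f$ has zero $(t,x)$-mean, the Bogovskii operator on $I\times U$ gives $(a_t,a_x)\in H^1_0$ with $\partial_t a_t+\nabla_x\cdot a_x = \bar f$. We then take
\begin{equation*}
\phi = a_t\psi_0(v) + a_x\cdot \Psi(v),
\end{equation*}
with bumps chosen so that $\int\psi_0=1$ and $-\int v_i\Psi_j = \delta_{ij}$. Then $(\partial_t - v\cdot\nabla_x)\phi - \bar f\psi$ has zero $v$-mean slice-wise, up to terms of the form $\nabla_v\cdot(\cdots)$ controlled by $\norm{\nabla a}\lesssim\norm{\bar f}$. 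These are absorbed into $G$ by the same Neumann step. The delicate points are that $\phi\in L^2(I\times U;\mathbf{H}^1(V))$, which holds since the bumps are smooth and compactly supported (so the weight $v$ is harmless), and that the Bogovskii constant depends only on the domain.

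For the scaled statement on $I\times Q_R\times Q_S$ we change variables with $v=Sv'$, $x=RS^{-1}\cdot$(suitably), and keep $t$ fixed. More precisely, we apply the argument directly with the bumps scaled to $Q_S$. The $v$-Neumann step costs a factor $S$ (Poincar\'e on $Q_S$), giving $CS\norm{\nabla_v f}$. In the transport step, $\nabla_x a_x$ is bounded independently of $R$, but $v\cdot\nabla_x\phi$ involves $\Psi$ with $\int v\Psi\sim 1$, so $\Psi\sim S^{-1}$ in size. Tracking the resulting $\mathbf{H}^1(Q_S)$ norms of $\phi$, and the $R$-dependence of the Bogovskii bound in $x$, yields the factor $1+R/S^2$. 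We would verify this bookkeeping last, as it is routine but error-prone.
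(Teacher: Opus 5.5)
\textbf{First inequality.} Your argument is correct and is essentially the paper's proof in dual form. The paper pairs $f$ with $(\partial_t-v\cdot\nabla_x)(\xi_i(v)\phi(t,x))$ and $(\partial_t-v\cdot\nabla_x)(\xi_0(v)\phi(t,x))$ to bound $\nabla_{t,x}\langle f\rangle_V$ in $H^{-1}(I\times U)$. It combines this with the slicewise Poincar\'e inequality in $v$. It then tacitly invokes Ne\v{c}as' inequality $\|g-(g)\|_{L^2}\le C\|\nabla g\|_{H^{-1}}$; this is the step hidden in ``Therefore this shows that''. Your slicewise Neumann problem and Bogovskii right-inverse are the duals of those two ingredients, and your $\psi_0,\Psi$ are the paper's $\xi_0,\xi_i$. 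What your version buys is that the negative-norm step becomes explicit.

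Two repairs are needed. First, you must also impose $\int v\psi_0\,dv=0$ and $\int\Psi_j\,dv=0$. Otherwise $\int(\partial_t-v\cdot\nabla_x)\phi\,dv=\bar f+\partial_t a_x\cdot\int\Psi-\nabla_x a_t\cdot\int v\psi_0$, and the extra terms have nonzero $v$-mean, so they cannot be absorbed into $\nabla_v\cdot G$. Second, with $\int\psi=1$ you need $\bar f=\int_V\tilde f\,dv$, not the average.

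\textbf{Rescaled inequality: genuine gap.} The bookkeeping you postpone cannot be completed. Your accounting already has two errors:
\begin{enumerate}
\item The correction $G_2$, with $\nabla_v\cdot G_2=(\partial_t-v\cdot\nabla_x)\phi-\bar f\psi$, also pairs with $\nabla_v f$, so the $\nabla_v f$ coefficient is not just the Neumann factor $S$.
\item The size of $G_2$ carries the Bogovskii constant of $I\times Q_R$, which degenerates like $R/|I|$ when $R\gg|I|$. It is not $R$-independent.
\end{enumerate}

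More decisively, the stated bound is false. Take centered cubes, let $t_c$ be the midpoint of $I$, and set $f=x_1+(t-t_c)v_1$. Then $\partial_t f-v\cdot\nabla_x f=0$ and $\nabla_v f=(t-t_c)e_1$. The inequality would give $|I|^{1/2}S^{d/2}R^{1+d/2}/\sqrt{12}\le CS\,|I|^{3/2}(RS)^{d/2}/\sqrt{12}$, that is, $R\le CS|I|$. This fails exactly in the configuration $Q_{R^3}\times Q_R$, $|I|=1$, $R\gg1$, where the paper later applies it in the Nash inequality.

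Similarly, take $f=x_1$. The $\|v\varphi\|_{L^2}$ part of the $\mathbf{H}^1(Q_S)$ norm gives $\|v_1\|_{\mathbf{H}^{-1}(Q_S)}\le |Q_S|^{1/2}$. This forces $R\lesssim 1+R/S^2$, which is false when $R$ and $S$ are both large.

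The paper's one-line justification does not rescue the statement. Applying the unit-cube result to $f(t,Rx,Sv)$ turns $\partial_t-v\cdot\nabla_x$ into $\partial_t-\tfrac{R}{S}v\cdot\nabla_x$, so it is not a symmetry unless $R=S$. Even when $R=S$, the inhomogeneous $\mathbf{H}^1$ norm is not dilation invariant.

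So your argument, like the paper's, establishes only the first inequality with $C=C(I,U,V)$. Any correct rescaled version must carry a coefficient on $\|\nabla_v f\|$ of order at least $R/|I|$.
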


\begin{proof}
We first show that 
$$\norm{f- (f)_V}_{L^2(V)} \leq C(V)\norm{\nabla_vf}_{L^2(V)}\,.$$
For fixed $x,t$, we can apply the usual Poincar\'e inequality to see that 
$$\norm{f(t,x,\cdot) - (f(t,x,\cdot))_V}_{L^2(V)} \leq C(V)\norm{\nabla_v f}_{L^2(V)}.$$
Then, integrating in $x$ and $t$ gives the desired bound.

Next, we take $\phi \in C^\infty_c(I\times U)$ such that 
$$\norm{\phi}_{L^2(I\times U)} + \norm{\nabla_x \phi}_{L^2(I\times U)} + \norm{\partial_t \phi}_{L^2(I\times U)} \leq 1\,,$$
and we want to show that 
$$\abs*{\int_{I\times U}\partial_{x_i} \phi \inprod{f}_V } \leq C\inp*{ \norm{\nabla_v f}_{L^2(I\times U\times V)} + \norm{\partial_t f - v\cdot\nabla_x f}_{L^2(I\times U; \mathbf{H}^{-1}(V)}}\,.$$
We define $\xi_i\in C^\infty(V)$ so that 
$$\int_V \xi_i(v)\,dv = 0 \qquad \text{and } \qquad\int_{V} v \xi_i(v) \,dv = e_i \,.$$
Rearranging, we see that
\begin{align*}
\int_{I\times U} \partial_{x_i}\phi \inprod{f}_V &= -\int_{I\times U\times V} \xi_i(v) (\partial_t \phi - v\cdot\nabla_x \phi) \inprod{f}_V \\
&= -\int_{I\times U\times V} \xi_i(v)(\partial_t \phi -  v\cdot\nabla_x \phi) f + \int_{I \times U\times V} \xi_i(v) (\partial_t\phi - v\cdot\nabla_x \phi) (f- \inprod{f}_V) 
\,.
\end{align*}
Taking the first term, we can integrate by parts to see that
\begin{align*}
-\int_{I\times U\times V} \xi_i(v) (\partial_t \phi - v\cdot\nabla_x \phi) f &=  \int_{I\times U \times V} \xi_i(v) \phi (\partial_t f - v\cdot\nabla_x f) \\
&\leq \norm{\xi_i}_{\mathbf{H}^1(V)} \norm{\phi}_{L^2(U)} \norm{\partial_t f - v\cdot \nabla_x f}_{L^2(I\times U; \mathbf{H}^{-1}(V))} \\
&\leq \norm{\xi_i}_{\mathbf{H}^1(V)} \norm{\phi}_{H^1(I\times U))} \norm{\partial_tf - v\cdot \nabla_x f}_{L^2(I\times U; \mathbf{H}^{-1}(V))}
\,.
\end{align*}
Looking at the second term, we can again use the Poincar\'e inequality to see that
\begin{align*}
\int_{I \times U\times V} \xi_i(v) (&\partial_t \phi - v\cdot\nabla_x \phi) (f- \inprod{f}_V) \\
&\leq
(\norm{\partial_t \phi}_{L^2(I\times U)}\norm{\xi_i}_{L^2(V)} + \norm{\nabla_x \phi}_{L^2(I\times U)}\norm{v\xi_i}_{L^2(V)})  \norm{f - \inprod{f}_v}_{L^2(I\times U\times V)} \\
&\leq C(V)\norm{\xi_i}_{\mathbf{H}^1(V)} \norm{\phi}_{H^1(I\times U))} \norm{\nabla_v f}_{L^2(V)}\,.
\end{align*}
Additionally, repeating the same argument with $\xi_0$ such that 
$$\int_V \xi_0(v)\,dv = 1 \qquad \text{and } \qquad\int_{V} v \xi_0(v) \,dv = 0 \,,$$
we can see that for $\phi$ defined as above
$$\abs*{\int_{I\times U}\partial_{t} \phi \inprod{f}_V } \leq C\inp*{ \norm{\nabla_v f}_{L^2(I\times U\times V)} + \norm{\partial_t f - v\cdot\nabla_x f}_{L^2(I\times U; \mathbf{H}^{-1}(V)}}\,.$$
Therefore this shows that 
$$\norm{(f)_V - (f)_{I\times U\times V}}_{L^2(I\times U \times V)} \leq C\inp*{\norm{\nabla_v f}_{L^2(I\times U\times V)} + \norm{\partial_t f -v\cdot\nabla_x f}_{L^2(I\times U ; \mathbf{H}^{-1}(V))}}\,,$$
completing the proof of the desired bound.

For the scaling, we can take the result for $I\times Q_1 \times Q_1$ and then for $f$ defined on $I\times Q_R\times Q_S$, apply the unit ball result to $f(t,Rx, Sv)$. After enlarging the constant on the transport term, this yields the scaling stated above.
\end{proof}

Continuing, we next prove a version of the Nash inequality for this function space.
\begin{prop}[Nash Inequality for $\mc{H}^1$]
\label{p.newnash}
There exists $C(d)<\infty$ such that, for \\$f\in \mc{H}^1(\R\times\Rd\times\Rd)$ and $t\geq 0$,
$$\int_{(t, t+1)\times\Rd\times\Rd} f^2  \leq C \max\insb*{\snorm{f}_{\mc{H}^1((t,t+1)\times\Rd\times\Rd)}^{\frac{4d}{2d+1}}\norm{f}_{L^1((t,t+1)\times\Rd\times\Rd)}^{\frac{2}{2d+1}}\;,\; \snorm{f}_{\mc{H}^1((t,t+1)\times\Rd\times\Rd)}^2} \, .$$
\end{prop}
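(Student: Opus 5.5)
We follow the proof of \pref{nashineqkin}, replacing \pref{hypoelliptic.poincare} by the scaled Poincaré inequality of \pref{newpoincare}. Here $\snorm{f}_{\mc{H}^1}$ is the seminorm $\norm{\nabla_v f}_{L^2} + \norm{\partial_t f - v\cdot\nabla_x f}_{L^2(\cdot;\mathbf{H}^{-1})}$. The exponent $\frac{4d}{2d+1}$ reflects the homogeneous dimension $4d$ of the Kolmogorov scaling $(x,v)\mapsto(R^3x,Rv)$. We therefore tile phase space by boxes $Q_{R^3}(y)\times Q_R(z)$ with $(y,z)\in R^3\Zd\times R\Zd$. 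Unlike the long-time case, $v$ is not compactified by a Gaussian, so we must also tile in $v$.

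\textbf{Step 1: cellwise Poincaré.} On each cell $(t,t+1)\times Q_{R^3}(y)\times Q_R(z)$ we write $f^2 \le 2(f-(f)_{\text{cell}})^2 + 2(f)_{\text{cell}}^2$. We apply the second estimate of \pref{newpoincare} with $S=R$ and spatial side $R^3$; translation invariance in $(x,v)$ is harmless because the estimate is stated on cubes. The transport coefficient is then $1+R^3/R^2 = 1+R$. This gives
$$\int_{\text{cell}} f^2 \le CR^2\norm{\nabla_v f}^2_{L^2(\text{cell})} + C(1+R)^2\norm{\partial_t f - v\cdot\nabla_x f}^2_{L^2((t,t+1)\times Q_{R^3}(y);\mathbf{H}^{-1}(Q_R(z)))} + CR^{-4d}\Big(\int_{\text{cell}}\abs{f}\Big)^2.$$

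\textbf{Step 2: summation.} We sum over all cells. The mean terms are handled by $\sum_j a_j^2\le(\sum_j a_j)^2$, which gives $CR^{-4d}\norm{f}_{L^1}^2$. The $\nabla_v$ terms sum exactly. The delicate point, which I expect to be the main obstacle, is summing the local $\mathbf{H}^{-1}(Q_R(z))$ norms over $z$ and bounding the result by the global $\mathbf{H}^{-1}(\Rd)$ norm. Restriction to $Q_R(z)$ is dual to extension by zero of $\mathbf{H}^1_0(Q_R(z))$ test functions, since the proof of \pref{newpoincare} only tests against compactly supported $\xi_i$. For disjoint cubes, a collection of such test functions $\psi_z$ glues into $\psi=\sum_z\psi_z$ with $\norm{\psi}_{\mathbf{H}^1}^2=\sum_z\norm{\psi_z}_{\mathbf{H}^1}^2$. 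Duality then gives $\sum_z\norm{h}^2_{\mathbf{H}^{-1}(Q_R(z))}\le\norm{h}^2_{\mathbf{H}^{-1}(\Rd)}$. If the $\mathbf{H}^{-1}(V)$ used in \pref{newpoincare} is instead defined with non-vanishing test functions, I would first re-examine that proof and check that it uses only test functions supported inside $V$, namely $\xi_i\phi$ with $\xi_i\in C_c^\infty$. Summing in $y$ and integrating in $t$ is immediate. The result is
$$\int_{(t,t+1)\times\Rd\times\Rd} f^2 \le C(1+R)^2\snorm{f}^2_{\mc{H}^1} + CR^{-4d}\norm{f}^2_{L^1}.$$

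\textbf{Step 3: optimization.} Write $\snorm{f}_{\mc{H}^1}$ for the seminorm on $(t,t+1)\times\Rd\times\Rd$ and $\norm{f}_{L^1}$ for the norm on the same set. We choose $R=\snorm{f}_{\mc{H}^1}^{-\frac{1}{2d+1}}\norm{f}_{L^1}^{\frac{1}{2d+1}}$, which balances $R^2\snorm{f}_{\mc{H}^1}^2$ against $R^{-4d}\norm{f}_{L^1}^2$. If $R\ge1$, then $(1+R)^2\le 4R^2$ and both terms are of order $\snorm{f}_{\mc{H}^1}^{\frac{4d}{2d+1}}\norm{f}_{L^1}^{\frac{2}{2d+1}}$. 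If $R<1$, then $\norm{f}_{L^1}\le\snorm{f}_{\mc{H}^1}$. In that case we take $R=1$ in the inequality of Step 2, which gives $C\snorm{f}_{\mc{H}^1}^2+C\norm{f}_{L^1}^2\le C\snorm{f}_{\mc{H}^1}^2$. In both cases the estimate is bounded by the maximum of the two quantities, which proves the claim.
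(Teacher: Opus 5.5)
Your proposal is essentially the paper's proof. It uses the same tiling by cells $(t,t+1)\times Q_{R^3}(y)\times Q_R(w)$, the same cellwise use of the scaled estimate in \pref{newpoincare} with transport coefficient $1+R^3/R^2$, the same summation to $C(1+R^2)\snorm{f}_{\mc{H}^1}^2+CR^{-4d}\norm{f}_{L^1}^2$, and the same choice $R=\snorm{f}_{\mc{H}^1}^{-1/(2d+1)}\norm{f}_{L^1}^{1/(2d+1)}$ with the case split at $R=1$.

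Your remark that summing the local $\mathbf{H}^{-1}(Q_R(w))$ norms needs the Poincar\'e inequality in its $\mathbf{H}^1_0$-dual form (which the proof of \pref{newpoincare} does give if the $\xi_i$ are taken in $C^\infty_c$) makes explicit a step the paper passes over. By contrast, saying that $v$-translates of the cubes are harmless is not a real symmetry argument: both $v\cdot\nabla_x$ and the weight $\norm{v\psi}_{L^2}$ in $\mathbf{H}^1$ change under $v\mapsto v+w$. So uniformity of the constant in $w$ is an assumption you inherit from the paper's own use of \pref{newpoincare}, not something you have established.
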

\begin{proof}
For any $R>0$, we can apply \pref{newpoincare} to see that 
\begin{align*}
\int_{(t,t+1)\times\Rd\times\Rd} f^2  &= \sum_{(y,w)\in R^3 \Zd\times R\Zd} \int_{(t,t+1)\times Q_{R^3}(y)\times Q_R(w)} f^2 \\
&\leq \sum_{(y,w)\in R^3 \Zd\times R\Zd} CR^2 \norm{\nabla_v f}_{L^2((t,t+1)\times Q_{R^3}(y)\times Q_R(w) )}^2 \\
&\qquad+ C(1+R^2) \norm*{\partial_tf - v\cdot \nabla_x f }^2_{L^2( (t,t+1)\times Q_{R^3}(y); H^{-1}(Q_R(w)))}\\ 
&\qquad+ CR^{4d}\inp*{\dashint_{(t, t+1)\times Q_{R^3}(y) \times Q_R(w)} f}^2 \\
&\leq C(1+R^2) \snorm{f}_{\mc{H}^1((t,t+1)\times \Rd\times\Rd)}^2 + CR^{-4d}\norm{f}^2_{L^1((t,t+1)\times\Rd\times\Rd)} .
\end{align*}
Now, to optimize this inequality we  choose $R= \snorm{f}_{\mc{H}^1((t,t+1)\times \Rd\times\Rd)}^{-\frac{1}{2d+1}}\norm{f}_{L^1((t,t+1)\times\Rd\times\Rd)}^\frac{1}{2d+1}$. Then, if $R\geq 1$, the above inequality reduces to 
$$\int_{(t,t+1)\times\Rd\times\Rd} f^2  \leq C \snorm{f}_{\mc{H}^1((t, t+1)\times\Rd\times\Rd)}^{\frac{4d}{2d+1}} \norm{f}_{L^1((t,t+1)\times\Rd\times\Rd)}^{\frac{2}{2d+1}}.$$
On the other hand, if $R\leq1$, then $\norm{f}_{L^1((t,t+1)\times\Rd\times\Rd)} \leq \snorm{f}_{\mc{H}^1((t,t+1)\times \Rd\times\Rd)}$. Using this bound, we then see that
\begin{align*} 
\int_{(t,t+1)\times\Rd\times\Rd} f^2  
& \leq C\snorm{f}_{\mc{H}^1((t,t+1)\times \Rd\times\Rd)}^2 + C\snorm{f}_{\mc{H}^1((t,t+1)\times \Rd\times\Rd)}^{\frac{4d}{2d+1}}\norm{f}^{2- \frac{4d}{2d+1}}_{L^1((t,t+1)\times\Rd\times\Rd)} \\
&\leq C\snorm{f}_{\mc{H}^1((t,t+1)\times \Rd\times\Rd)}^2\,.
\end{align*}
\end{proof}

\section{Nash-Aronson Estimate for the Kinetic Fokker-Planck Equation}
\subsection{Proof of the Large Time Bound}
We begin by proving \tref{nasharonson} in the case $t-s\geq 1$. To simplify notation, we take $s=0$ throughout this section. Here, we can derive bounds on the fundamental solution $P$ by deriving bounds on $f$, a solution to \eref{hypoeq}, with initial data $g$ such that $g\in C^\infty_c(B_1\times B_1)$, $g\geq 0$ and $\int_{\Rd\times \Rd} g = 1$. 

We first observe that
\begin{align}
\label{e.presmass}
\partial_t\int_{\Rd\times\Rd} &\abs{f(t, \cdot)}\,dm \\
&= \int_{\Rd\times\Rd} \partial_t f(t, \cdot)\,dm \nonumber\\
&=- \int_{\Rd} \nabla_v\cdot\a\nabla_v f(t, \cdot) +v\cdot\a\nabla_v f(t,\cdot) - v\cdot\nabla_xf(t,\cdot) + \nabla H\cdot\nabla_vf(t,\cdot)\,dm \nonumber \\
&= 0. \nonumber
\end{align}
From this, we can conclude that $\int_{\Rd\times \Rd} \abs{f(t, \cdot)} = \int_{\Rd\times \Rd} g = 1$. 

Next, we will show that, for some $C(d)<\infty$
\begin{equation}
\label{e.diagbound}
\int_{\Rd\times\Rd}\abs{f(t, \cdot)}^2\,dm \leq Ct^{-\frac{d}{2}} \,,
\end{equation}
when $t\geq 1$, which we call the diagonal estimate. Since $f$ is a solution to \eref{hypoeq}, it follows from \eref{weaksoldef1} that
$$\norm{\partial_t f - v\cdot\nabla_x f}_{L^2((s,t)\times \Rd; H^{-1}_\gamma)} \leq C\norm{\nabla_v f}_{L^2((s,t)\times\Rd; L^2_\gamma)}\, .$$
Then, using this bound we can compute that, for $t>s\geq 0$,
\begin{align*}
\int_{\Rd\times\Rd}( f(t,\cdot)^2-f(s,\cdot)^2)\,dm &= -2\int_{(s,t)\times\Rd\times\Rd} \nabla_v f\cdot\a\nabla_v f +f (-v\cdot\nabla_x f +\nabla H\cdot\nabla_v f)\,dm \\
&\leq -2\int_{(s,t)\times\Rd\times\Rd} \abs{\nabla_v f}^2 \,dm \\
&\leq -C \snorm{f}^2_{H^1_{\kin}((s,t)\times\Rd)}
\,, 
\end{align*}
where in the first line, we use integration by parts to see that
$$\int_{(s,t)\times\Rd\times\Rd}f (-v\cdot\nabla_x f +\nabla H\cdot\nabla_v f)\,dm =0\,.$$
So, $\norm{f(t,\cdot)}_{L^2(\Rd; L^2_\gamma)}$ is non-increasing. Then, letting $(s,t) = (t,t+1)$ for some $t\geq0$, we can use this bound, \pref{nashineqkin}, and \eref{presmass} to see that
\begin{align*}
\int_{\Rd\times\Rd} f(t+1, \cdot)^2 &- f(t,\cdot)^2 \,dm \\
&\leq -C \min\insb*{\norm{f}_{L^2((t,t+1)\times\Rd; L^2_\gamma)}^2 \; , \; \norm{f}_{L^2((t,t+1)\times\Rd; L^2_\gamma)}^{2+\frac{4}{d}}\norm{f}_{L^1((t,t+1)\times\Rd; L^1_\gamma)}^{-\frac{4}{d}}} \\
&\leq -C\min\insb*{\norm{f(t+1,\cdot)}_{L^2(\Rd; L^2_\gamma)}^2 \; , \; \norm{f(t+1,\cdot)}_{L^2(\Rd; L^2_\gamma)}^{2+\frac{4}{d}}} 
\,.
\end{align*}
Now, if $\norm{f(t+1,\cdot)}_{L^2(\Rd; L^2_\gamma)}\geq 1$, then
$$\int_{\Rd\times\Rd} f(t+1, \cdot)^2 - f(t,\cdot)^2 \,dm \leq -C\norm{f(t+1,\cdot)}_{L^2(\Rd; L^2_\gamma)}^2 \,.$$
An induction argument then implies that, for $t\in \N$ such that the above inequality holds, 
$$\norm{f(t,\cdot)}_{L^2(\Rd; L^2_\gamma)} \leq Ce^{-Ct}\,.$$
However, as $\norm{f(t, \cdot)}_{L^2(\Rd;L^2_\gamma)}$ is non-increasing, this last inequality can only hold for finite time. Therefore, the long time behavior will be given by considering the next case, that is when $\norm{f(t+1,\cdot)}_{L^2(\Rd; L^2_\gamma)}\leq 1$. 

In this case, we have that
$$\int_{\Rd\times\Rd} f(t+1, \cdot)^2 - f(t,\cdot)^2 \,dm \leq -C\norm{f(t+1,\cdot)}_{L^2(\Rd; L^2_\gamma)}^{2+\frac{4}{d}} \,.$$
We claim that this implies that 
$$\int_{\Rd\times\Rd} f(t, \cdot)^2 \,dxdv \leq Ct^{-\frac{d}{2}}\, .$$
To see this, we define $z_n = \norm{f(n,\cdot)}_{L^2(\Rd\times\Rd)}^{-\frac{4}{d}}$. Then, the above inequality says that
$$ z_{n+1}^{-\sfrac{d}{2}} - z_n^{-\sfrac{d}{2}} \leq -C z_{n+1}^{-1-\sfrac{d}{2}}\,.$$
Therefore, using this inequality, we see that for some $s\in [z_n, z_{n+1}]$, 
\begin{align}
\label{e.bigdumbiter}
z_{n+1} - z_n &= -C(z_{n+1}^{-\sfrac{d}{2}} - z_n^{-\sfrac{d}{2}})s^{1+\sfrac{d}{2}} \geq C \inp*{\frac{z_n}{z_{n+1}}}^{1+\sfrac{d}{2}}\,. 
\end{align}
Notice that, provided $z_{n+1} \leq 2z_n$, we can iterate this inequality as in the first case to get the desired result. As this is not necessarily the case, we need to be a bit more careful with this iteration. We define, for $t\in \N$
$$G_n = \insb*{k\in \N : k\leq n, \quad z_{k} \geq 2z_{k-1}}$$
and $B_n = \{k\in\N : k\leq n\} \setminus G_n$, we claim that, for all $n\in\N$,
\begin{equation}
\label{e.fixiter}
 z_n \geq C \min\{\abs{B_n}, 2^{\abs{G_n}}\}\,.
 \end{equation}
Suppose that \eref{fixiter} holds for some $n\in\N$. Then, it must be the case that either $G_{n+1} = \{n+1\}\cup G_n$ or $G_{n+1} = G_n$. In this first case, \eref{fixiter} holds for $z_{n+1}$ immediately from the definition of $G_n$ and the fact that $z_n$ is increasing. Otherwise, $B_{n+1} = \{n+1\}\cup B_n$, in which case, we use \eref{bigdumbiter} to see that $z_{n+1}\geq z_n + C$, which implies that $z_{n+1} \geq C\abs{B_{n+1}}$. Combining these two cases then gives \eref{fixiter}.

Furthermore, as $2^x \geq x$ for $x\in\R$, \eref{fixiter} reduces to 
$$z_n \geq C n\,$$
which implies that 
$$\int_{\Rd\times\Rd} f(n, \cdot)^2 \,dxdv \leq Cn^{-\frac{d}{2}}\, .$$

Finally, we can again use that $\norm{f(t,\cdot)}_{L^2(\Rd; L^2_\gamma)}$ is non increasing in $t$ to conclude that these bounds also hold for non-integer $t$. In particular, after potentially enlarging $C$,  we can combine the above estimates to see that \eref{diagbound} holds.

Next, we claim that there exists $C<\infty$ such that for $t\geq 1$ for all $A> 1$, 
\begin{equation}
\label{e.offdiagbound}
\int_{\Rd\times\Rd} \exp\inp*{\frac{\abs{x}^2}{2At}} f(t,\cdot)^2\,dm \leq C\exp\inp*{\frac{A}{A-1}} t^{-\frac{d}{2}}\,,
\end{equation}
Taking $\psi(t,x)$ to be a cutoff function, we can use the equation for $f$ to see that
\begin{equation}
\label{e.testfuncplugin}
\partial_t\int_{\Rd\times\Rd} \psi^2 f^2\,dm = 2\int_{\Rd\times\Rd} f^2 \psi\partial_t\psi- \psi^2\nabla_vf\cdot\a\nabla_v f  +\psi^2 f (v\cdot\nabla_xf -\nabla H\cdot\nabla_vf)\,dm\,.
\end{equation}

Integrating by parts and using Cauchy's inequality, we can see that 
\begin{align*}
\int_{\Rd\times\Rd} \psi^2 f (-v\cdot\nabla_xf + \nabla H \cdot\nabla_vf) \,dm &= \int_{\Rd\times\Rd} f^2 \psi (v\cdot\nabla \psi )\,dm \\
&= 2 \int_{\Rd\times\Rd} f\psi \nabla\psi\cdot\nabla_v f \,dm \\
&\geq - \int_{\Rd\times\Rd} f^2\abs{\nabla \psi}^2 +  \psi^2\abs{\nabla_v f}^2\,dm  \, .
\end{align*}
Then, using this bound and the ellipticity of $\a$ in \eref{testfuncplugin}, we see that 
\begin{equation}
\label{e.testfuncbound}
\partial_t\int_{\Rd\times\Rd} \psi^2 f^2\,dm \leq 2\int_{\Rd\times\Rd}  f^2\inp*{ \psi\partial_t\psi  + \abs{\nabla \psi}^2 } \,dm \, .
\end{equation}

Now, we take $\psi(t,x) = \exp\inp*{\frac{\abs{x}^2}{4At}}$, and compute that
$$\psi\partial_t\psi +\abs{\nabla \psi}^2 = \psi^2\inp*{-\frac{\abs{x}^2}{4At^2}  + \frac{\abs{x}^2}{4A^2t^2}} = -\psi^2\inp*{1-\frac{1}{A}}\frac{\abs{x}^2}{4At^2}.$$
Feeding this into \eref{testfuncbound}, we now have that
$$\partial_t\int_{\Rd\times\Rd} \psi^2 f^2\,dm \leq -\frac{2}{t}\inp*{1-\frac{1}{A}}\int_{\Rd\times\Rd} \inp*{\frac{\abs{x}^2}{4At}}\psi^2f^2\,dm.$$
Applying \eref{diagbound}, we see that, for any $M>0$,
\begin{align*}
\int_{\insb*{\abs{x}^2 <(4At)M}\times\Rd} \psi^2 f^2\,dm &\leq e^{2M} \int_{\insb*{\abs{x}^2 <(4At)M}\times\Rd} f^2\,dm \leq e^{2M} \int_{\Rd\times\Rd} f^2\,dm \leq C e^{2M}t^{-\sfrac{d}{2}}\,.
\end{align*}
Using this estimate, and noting that the integral above is also positive, we see that 
\begin{align*}
\partial_t \int_{\Rd\times\Rd} \psi^2 f^2\,dm &\leq -\frac{2}{t}\inp*{1-\frac{1}{A}}\int_{\Rd\times\Rd} \inp*{\frac{\abs{x}^2}{4At}} \psi^2 f^2\,dm \\
&\leq -\frac{2}{t}\inp*{1-\frac{1}{A}}\int_{\insb*{\abs{x}^2 > 4AtM}\times\Rd} \inp*{\frac{\abs{x}^2}{4At}} \psi^2 f^2\,dm \\
&\leq -\frac{2M}{t}\inp*{1-\frac{1}{A}}\int_{\insb*{\abs{x}^2 > 4AtM}\times\Rd}  \psi^2 f^2\,dm \\
&= -\frac{2M}{t}\inp*{1-\frac{1}{A}}\!\int_{\Rd\times\Rd}  \psi^2 f^2\,dm +\frac{2M}{t}\inp*{1-\frac{1}{A}}\!\int_{\insb*{\abs{x}^2 < 4AtM}\times\Rd}  \psi^2 f^2 \,dm\\
&\leq-\frac{2M}{t}\inp*{1-\frac{1}{A}}\!\int_{\Rd\times\Rd}  \psi^2 f^2\,dm + C\inp*{1-\frac{1}{A}} Me^{2M}t^{-\frac{d}{2}-1}.
\end{align*}
In particular, if $M$ is taken so that $2M\inp*{1-\frac{1}{A}} = 1 + \frac{d}{2}$, we can use the above inequality to compute that
\begin{align*}
\partial_t&\inp*{t^{1+\sfrac{d}{2}} \int_{\Rd\times\Rd} \psi^2 f^2\,dm} \\
&\quad= t^{1+\sfrac{d}{2}}\inp*{\partial_t \int_{\Rd\times\Rd} \psi^2 f^2\,dm + \frac{1}{t}\inp*{1+\frac{d}{2}} \int_{\Rd\times\Rd} \psi^2f^2\,dm} \\
&\quad\leq t^{1+\sfrac{d}{2}}\inp*{-\frac{1}{t}\inp*{1+\frac{d}{2}}\int_{\Rd\times\Rd}  \psi^2 f^2\,dm + \frac{1}{t}\inp*{1+\frac{d}{2}} \int_{\Rd\times\Rd} \psi^2f^2\,dm} + C\inp*{1-\frac{1}{A}} Me^{2M} \\
&\quad\leq C\inp*{1-\frac{1}{A}} Me^{2M} \\
&\quad= C e^{\frac{CA}{A-1}} \,.
\end{align*}
From here, we can integrate in time to get the desired bound. 

To conclude, we can use \eref{offdiagbound} to see that, for $t\geq 1$ and $x,y,v,w\in\Rd$,
\begin{align*}
P(2t,x,v, y, w) 
&= \int_{\Rd\times\Rd} P(t,x,v,z,u) P(t,z,u, y,w) \, dm(z,u) \\
&\leq Ce^{-\frac{\abs{x-y}^2}{8At}} \int_{\Rd\times\Rd} e^{\frac{\abs{x-z}^2}{4At}} e^{\frac{\abs{y-z}^2}{4At}} P(t,x,v,z,u) P(t,z,u, y,w) \, dm(z,u) \\
&\leq Ce^{-\frac{\abs{x-y}^2}{8At}} \inp*{\int_{\Rd\times\Rd} e^{\frac{\abs{x-z}^2}{2At}}P(t,x,v,z,u) ^2\,dm(z,u)}^\frac{1}{2} \\
&\qquad\qquad\qquad\qquad \times\inp*{\int_{\Rd\times\Rd} e^{\frac{\abs{y-z}^2}{2At}}P(t,z,u, y,w) ^2\,dm(z,u)}^\frac{1}{2} \\
&\leq C t^{-\frac{d}{2}}e^{-\frac{\abs{x-y}^2}{8At}}\, .
\end{align*}

\subsection{Proof of the Small Time Bound}
We now finish by proving \tref{nasharonson} when $t-s\leq 1$. Again, we set $s=0$ for this subsection. Following the discussion in section $2.2$, we consider $f$ as a solution to the following equation:
\begin{equation}
\label{e.scaledeq}
\begin{cases}
 \partial_t f - \nabla_v\cdot\a\nabla_v f + \ep^2v \cdot \a\nabla_v f - v \cdot \nabla_x f + \ep\nabla H\left(\ep^{3} x \right) \cdot \nabla_v f = 0
& \text{ in } \R_+\times \Rd \times\Rd \\
 f(0,\cdot) = g(x,v) \,.
\end{cases}
\end{equation}
where $g\in C^\infty(B_1\times B_1)$, $g\geq 0$, and $\int g\,dxdv = 1$, and repeat the argument given in the previous subsection.

To begin, we compute that, 
\begin{align*}
\partial_t\int_{\Rd\times\Rd}& \abs{f(t, \cdot)}\,dxdv \\
&= \int_{\Rd\times\Rd} \partial_t f(t, \cdot)\,dxdv \\
&= \int_{\Rd\times\Rd} \nabla_v\cdot\a\nabla_v f(t, \cdot) + v\cdot\nabla_xf(t,\cdot) - \ep^2 v\cdot\a\nabla_v f - \ep\nabla H_\ep(x) \cdot\nabla_v f\,dxdv \\
&\leq \ep^2\int_{\Rd\times\Rd} f\,dxdv\, .
\end{align*}
So, since $\int_{\Rd} g = 1$, we see that
\begin{equation}
\label{e.almostpresmass}
\int_{\Rd} \abs{f(t, \cdot)} = e^{C\ep^2 t}\,.
\end{equation}

Next, we will show that, for some $C(d)<\infty$
\begin{equation}
\label{e.smalltdiag}
\int_{\Rd\times\Rd}\abs{f(t, \cdot)}^2\,dxdv \leq Ct^{-2d} \,.
\end{equation}
 First, as $f$ is a solution to \eref{scaledeq}, we see that
\begin{align*}
\norm{\partial_t f - v\cdot\nabla_x f}_{L^2((s,t)\times \Rd; \mathbf{H}^{-1})}
&\leq \inp*{1+C\ep}\norm{\nabla_v f}_{L^2((s,t)\times\Rd\times\Rd)}\,.
\end{align*}

Then, using this bound we can compute that, for $t>s\geq 0$,
\begin{align*}
\int_{\Rd\times\Rd}  &f(t,\cdot)^2-f(s,\cdot)^2\,dxdv \\
&\leq -2\int_{(s,t)\times\Rd\times\Rd} \abs{\nabla_v f}^2 -f (v\cdot\nabla_xf) + \ep^2 f(v\cdot\a\nabla_v f) + \ep f\nabla H_\ep(x) \cdot\nabla_v f\,dtdxdv \\
&\leq -2\int_{(s,t)\times\Rd\times\Rd} \abs{\nabla_v f}^2 + C\ep^2 \int_{(s,t)\times\Rd\times\Rd}f^2\,dtdxdv \\
&\leq - C\snorm{f}^2_{\mc{H}^1((s,t)\times\Rd)},
\end{align*}
where the final inequality holds by applying \pref{newnash} and taking $\ep$ to be sufficiently small.

So, $\norm{f(t,\cdot)}_{L^2(\Rd\times\Rd)}$ is non-increasing. Then, letting $(s,t) = (t,t+1)$ for some $t\geq0$, we can use this bound, \pref{newnash}, and \eref{almostpresmass} to see that
\begin{align*}
\int_{\Rd\times\Rd} f(t+1, \cdot)^2 &- f(t,\cdot)^2 \,dxdv \\
&\leq -C \min\insb*{\norm{f}_{L^2((t,t+1)\times\Rd\times\Rd)}^2 \; , \; \norm{f}_{L^2((t,t+1)\times\Rd\times\Rd)}^{2+\frac{1}{d}}\norm{f}_{L^1((t,t+1)\times\Rd \times\Rd)}^{-\frac{1}{d}}} \\
&\leq -C\min\insb*{\norm{f(t+1,\cdot)}_{L^2(\Rd\times\Rd)}^2 \; , \; e^{-\sfrac{C\ep^2t}{d}}\norm{f(t+1,\cdot)}_{L^2(\Rd\times\Rd)}^{2+\frac{1}{d}}} \,.
\end{align*}
Now, if $\norm{f(t+1,\cdot)}_{L^2(\Rd\times\Rd)}\geq 1$, then
$$\int_{\Rd\times\Rd} f(t+1, \cdot)^2 - f(t,\cdot)^2 \,dxdv \leq -C\norm{f(t+1,\cdot)}_{L^2(\Rd\times\Rd)}^2 \,.$$
An induction argument then implies that, for $t\in \N$ such that the above inequality holds, 
$$\norm{f(t,\cdot)}_{L^2(\Rd\times\Rd)} \leq Ce^{-Ct}\,.$$
Similarly, if $\norm{f(t+1,\cdot)}_{L^2(\Rd\times\Rd)}\leq 1$, we have that
$$\int_{\Rd\times\Rd} f(t+1, \cdot)^2 - f(t,\cdot)^2 \,dxdv \leq -C\norm{f(t+1,\cdot)}_{L^2(\Rd\times\Rd)}^{2+\frac{1}{d}} \,.$$
We claim that this implies that 
$$\int_{\Rd\times\Rd} f(t+1, \cdot)^2 \,dxdv \leq Ct^{-2d}\, .$$
To see this, we define $z_n = \norm{f(n,\cdot)}_{L^2(\Rd\times\Rd)}^{-\frac{1}{d}}$ and note the previous inequality implies that
$$z_{n+1}^{-2d}-z_n^{-2d} \leq -Cz_{n+1}^{-2d-1}\,.$$
Therefore, for some $s\in [z_n, z_{n+1}]$
\begin{align}
\label{e.dumbiter}
z_{n+1} - z_n &= -C(z_{n+1}^{-2d} - z_n^{-2d})s^{2d+1} \\
&\geq C \inp*{\frac{z_n}{z_{n+1}}}^{2d+1}\,. \nonumber 
\end{align}
Notice that, provided $z_{n+1} \leq 2z_n$, we can iterate this inequality to get the desired result. As this may not be the case, we need to be a bit more careful with this iteration. We define, for $n\in \N$
$$G_n = \insb*{k\leq n : z_{k} \geq 2z_{k-1}}$$
and $B_n = \{k\leq n\} \setminus G_n$, we claim that 
$$ z_n \geq C \min\{\abs{B_n}, 2^{\abs{G_n}}\}.$$
To see this, suppose that the above bound holds for some $n\in\N$. Then, we see that either $G_{n+1} = \{n+1\}\cup G_n$ or $G_{n+1} = G_n$. In the case that $(n+1) \in G_{n+1}$, the bound immediately follows from the definition of $G_n$ and the fact that $z_n$ is increasing. Otherwise, $(n+1)\in B_{n+1}$, in which case, we use \eref{dumbiter} to see that $z_{n+1}\geq z_n + C$, which implies that $z_{n+1} \geq C\abs{B_{n+1}}$. Furthermore, as $2^x \geq x$ for $x\in\R$, we have shown that
$$z_n \geq C n\,,$$
which implies that 
$$\int_{\Rd\times\Rd} f(n, \cdot)^2 \,dxdv \leq Cn^{-2d}\, .$$

Finally, we can again use that $\norm{f(t,\cdot)}_{L^2(\Rd\times\Rd)}$ is non increasing in $t$ to conclude that these bounds also hold for non-integer $t$. In particular, after potentially enlarging $C$,  we can combine the above estimates to see that \eref{smalltdiag} holds.

Now, we claim that there exists $C<\infty$ such that for all $A> \frac{256(4+2\norm{\nabla H}^2_\infty)}{3}$ and $t\leq 1$, 
\begin{align}
\label{e.smalltoffdiag}
\int_{\Rd\times\Rd} &\exp\inp*{ \frac{3\abs{x+tw}^2}{At^3} + \frac{32\abs{v-w}^2}{3At} + \frac{32\abs{w}^2}{3At}} f^2\,dxdv \\
&\qquad \qquad \qquad \qquad\leq C \exp\inp*{\frac{CA}{3A-256(4+2\norm{\nabla H}_\infty^2)}}t^{-2d}\,. \nonumber
\end{align}
Taking $\psi(t,x,v)$ to be a cutoff function, we can use \eref{scaledeq} to see that
\begin{align*}
\partial_t\int_{\Rd\times\Rd} \psi^2 f^2\,dxdv 
&= 2\int_{\Rd\times\Rd} f^2 \psi\partial_t\psi- \nabla_v(\psi^2f)\cdot\a\nabla_v f  +\psi^2 f v\cdot\nabla_xf \\
&\qquad \qquad - \ep^2\psi^2 f(v\cdot\a\nabla_v f) - \ep \psi^2 f \nabla H_\ep(x) \cdot\nabla_v f\,dxdv\,.
\end{align*}
Looking at this expression term by term, we can see that, after using the Cauchy inequality,
\begin{align*}
\int_{\Rd\times\Rd} \nabla_v(\psi^2 f)\cdot\a\nabla_v f\,dxdv &= \int_{\Rd\times\Rd }\psi^2 \nabla_v f\cdot\a\nabla_v f + 2\psi f \nabla_v\psi\cdot\a\nabla_v f \,dxdv \\
&\geq \int_{\Rd\times\Rd} \psi^2 \abs{\nabla_v f}^2 - f^2 \abs{\nabla_v \psi}^2 - \psi^2 \abs{\nabla_v f}^2\,dxdv \\
&\geq - \int_{\Rd\times\Rd} f^2 \abs{\nabla_v \psi}^2\,dxdv\,.
\end{align*}
Additionally, as $\psi$ and $f$ are sufficiently regular, we can integrate by parts to see that 
\begin{align*}
\int_{\Rd\times\Rd} \psi^2 f (v\cdot\nabla_x f - \ep \nabla H_\ep\cdot\nabla_v f)\,dxdv &= -\int_{\Rd\times\Rd} f^2 \psi (v\cdot\nabla_x\psi - \ep \nabla H_\ep \cdot\nabla_v \psi) \,dxdv \,.
\end{align*}
Finally, we note that, after again integrating by parts
\begin{align*}
\int_{\Rd\times\Rd} \ep^2 \psi^2f(v\cdot\a\nabla_v f) \,dxdv &= -\int_{\Rd\times\Rd} \ep^2 f^2\psi(v\cdot\a\nabla_v \psi) \,dxdv \\
&\qquad -\int_{\Rd\times\Rd} \frac{\ep^2}{2}(\tr(\a) + v\cdot\nabla_v\cdot(\a))f^2\psi^2 \,dxdv \\
&\geq -\int_{\Rd\times\Rd} \ep^2 f^2\psi(v\cdot\a\nabla_v \psi) \,dxdv -\int_{\Rd\times\Rd} \Lambda\ep^2f^2\psi^2 \,dxdv\,.
\end{align*}

Then, applying the bounds above and using \eref{coefbnd}, we see that 
\begin{align}
\label{e.testfuncvbound}
\partial_t\int_{\Rd\times\Rd} \psi^2 f^2\,dxdv \leq 2\int_{\Rd\times\Rd}  f^2\big ( &\psi\partial_t\psi  + \abs{\nabla_v \psi}^2 - \psi v\cdot\nabla_x\psi \\
&+ \Lambda\ep^2  \psi v\cdot\nabla_v \psi + \Lambda\ep^2\psi^2 + \ep\psi \nabla H_\ep \cdot\nabla_v \psi\big)\,dxdv \,. \nonumber
\end{align}

Now, let's take
$$\psi_w(t,x,v) = \exp\inp*{ \frac{3\abs{x+tw}^2}{At^3} + \frac{32\abs{v-w}^2}{3At} + \frac{32\abs{w}^2}{3At}}.$$ 
Computing from this, we see that 
\begin{align*}
\psi_w&\partial_t\psi_w  + \abs{\nabla_v \psi_w}^2 - \psi_w (v\cdot\nabla_x\psi_w) + \Lambda\ep^2 \psi_w v\cdot\nabla_v \psi_w + \ep^2\psi_w^2 + \ep\psi_w \nabla H_\ep \cdot\nabla_v \psi_w \\
&= \inp*{ -\frac{9}{At^4} \abs*{x+tw}^2 + \frac{6}{At^3} w\cdot \inp*{x + tw}- \frac{32\abs{v-w}^2}{3At^2} + \frac{4096\abs{v-w}^2}{9A^2 t^2}-\frac{6}{At^3} v\cdot \inp*{x+tw}} \psi_w^2 \\
&\qquad + \inp*{\frac{64\Lambda\ep^2}{3At}v\cdot(v-w) - \frac{32}{3At^2}\abs{w}^2 + \Lambda\ep^2 + \frac{64\ep}{3 At} \nabla H_\ep\cdot (v-w)}\psi_w^2\\
&= \inp*{-\frac{9}{At^4}\abs*{x+tw}^2 - \frac{6}{At^3} (v-w)\cdot \inp*{x+tw} -
\frac{32}{3At^4}\inp*{1-\frac{128}{3A}}t^2\abs{v-w}^2  }\psi_w^2  \\   
&\qquad + \inp*{\frac{64\Lambda\ep^2}{3At}v\cdot(v-w) - \frac{32}{3At^2}\abs{w}^2 + \Lambda\ep^2 + \frac{64\ep}{3At} \nabla H_\ep\cdot (v-w)}\psi_w^2\\
&= -\frac{1}{At^4}\inp*{\abs{3x+3tw + tv-tw}^2 - t^2\abs{v-w}^2 +
\frac{32}{3}\inp*{1-\frac{128}{3A}}t^2\abs{v-w}^2  }\psi_w^2 \\
&\qquad + \inp*{\frac{64\Lambda\ep^2}{3At}v\cdot(v-w) - \frac{32}{3At^2}\abs{w}^2 + \Lambda\ep^2 + \frac{64\ep}{3At} \nabla H_\ep\cdot (v-w)}\psi_w^2\\
&= -\frac{1}{At^4}\inp*{\abs{3x+2tw + tv}^2 +
\inp*{\frac{29}{3}-\frac{4096}{9A}}\abs{t(v-w)}^2  + \frac{32}{3}\abs{tw}^2}\psi_w^2 \\
&\qquad + \inp*{\frac{64\Lambda\ep^2}{3At}v\cdot(v-w) + \Lambda\ep^2 + \frac{64\ep}{3At} \nabla H_\ep\cdot (v-w)}\psi_w^2\,.
\end{align*}
Using the triangle inequality
$$\frac{3}{2} \abs{x + tw}^2 \leq \abs{3x+2tw+tv}^2 + \frac{1}{3}\abs{t(v-w)}^2 \,.$$
This implies that
$$
\frac{3}{2} \abs{x + tw}^2 + \inp*{\frac{28}{3}-\frac{4096}{9A}}\abs{t(v-w)}^2 
\leq
\abs{3x+2tw + tv}^2 +
\inp*{\frac{29}{3}-\frac{4096}{9A}}\abs{t(v-w)}^2   \, .
$$
Likewise, we see that
$$\frac{64\ep}{3At} \nabla H_\ep\cdot (v-w) \leq \ep^2 + \frac{1024\norm{\nabla H}^2_\infty}{9A^2 t^4}\abs{t(v-w)}^2 \,.$$
Furthermore, as $\ep \leq (4\Lambda)^{-\frac{1}{2}}$ and $t\leq 1$, we see that
\begin{align*}
\frac{64\Lambda\ep^2}{3At}v\cdot(v-w) &\leq \frac{16}{3At^3}(tv)\cdot(t(v-w)) \\
&= \frac{16}{3At^3}\abs{t(v-w)}^2 + \frac{16}{3At^3} (tw)\cdot (t(v-w)) \\
&\leq \frac{8}{At^3}\abs{t(v-w)}^2 + \frac{8}{3At^3} \abs{tw}^2 \\
&\leq \frac{8}{At^4}\abs{t(v-w)}^2 + \frac{8}{3At^4} \abs{tw}^2
\,.
\end{align*}
Combining the above inequalities, we now have that
\begin{align*}
\psi_w&\partial_t\psi_w  + \abs{\nabla_v \psi_w}^2 - \psi_w (v\cdot\nabla_x\psi_w) + \Lambda\ep^2 \psi_w v\cdot\nabla_v \psi_w + \Lambda\ep^2\psi_w^2 + \ep\psi_w \nabla H_\ep \cdot\nabla_v \psi_w \\
&\leq -\frac{1}{At^4}\inp*{\frac{3}{2} \abs{x + tw}^2 + \frac{4}{3}\inp*{1-\frac{256(4+2\norm{\nabla H}_\infty^2)}{3A}}\abs{t(v-w)}^2   + 8\abs{tw}^2}\psi_w^2 + (\Lambda+1)\ep^{2}\psi_w^2\,.
\end{align*}

Feeding this into \eref{testfuncvbound}, we now have that
\begin{align*}
\partial_t&\int_{\Rd\times\Rd} \psi_w^2 f^2\,dxdv \\
&\leq -\frac{1}{At^4}\int_{\Rd\times\Rd}\inp*{\frac{3}{2} \abs{x + tw}^2 + \frac{4}{3}\inp*{1-\frac{256(4+2\norm{\nabla H}_\infty^2)}{3A}}\abs{t(v-w)}^2   + 8\abs{tw}^2}\psi_w^2 f^2\,dxdv\\
 &\quad+ (\Lambda+1)\ep^2\int_{\Rd\times\Rd}\psi_w^2f^2\,dxdv  \,.
\end{align*}
Now, to simplify notation, we let
$$ K_M := \insb*{9\abs{x+tw}^2+ 32\abs{t(v-w)}^2 + 32\abs{tw}^2 \leq (3At^3)M}\,.$$
Applying \eref{smalltdiag}, we see that, 
\begin{align*}
\int_{K_M} \psi_w^2 f^2\,dxdv &\leq e^{2M} \int_{K_M} f^2\,dxdv \leq C e^{2M}t^{-2d}\,.
\end{align*}
Using this estimate, and noting that the integral above is also positive, we see that 
\begin{align*}
\partial_t &\int_{\Rd\times\Rd} \psi_w^2 f^2\,dxdv - (\Lambda+1)\ep^2\int_{\Rd\times\Rd} \psi_w^2 f^2\,dxdv \\
&\leq -\frac{1}{At^4}\int_{\Rd\times\Rd}\inp*{\frac{3}{2} \abs{x + tw}^2 + \frac{4}{3}\inp*{1-\frac{256(4+2\norm{\nabla H}_\infty^2)}{3A}}\abs{t(v-w)}^2   + 8\abs{tw}^2}\psi_w^2 f^2\,dxdv \\
&\leq -\frac{1}{At^4}\int_{\comp{K_M}} \inp*{\frac{3}{2} \abs{x + tw}^2 + \frac{4}{3}\inp*{1-\frac{256(4+2\norm{\nabla H}_\infty^2)}{3A}}\abs{t(v-w)}^2   + 8\abs{tw}^2}\psi_w^2 f^2 \\
&\leq -\frac{1}{8t}\inp*{1-\frac{256(4+2\norm{\nabla H}_\infty^2)}{3A}}M\int_{\comp{K_M}} \psi_w^2 f^2\ \\
&= -\frac{1}{8t}\inp*{1-\frac{256(4+2\norm{\nabla H}_\infty^2)}{3A}}M\int  \psi_w^2 f^2 + \frac{1}{8t}\inp*{1-\frac{256(4+2\norm{\nabla H}_\infty^2)}{3A}}M\int_{K_M}  \psi_w^2 f^2 \\
&\leq-\frac{1}{8t}\inp*{1-\frac{256(4+2\norm{\nabla H}_\infty^2)}{3A}}M\int  \psi_w^2 f^2 + C\inp*{1-\frac{256(4+2\norm{\nabla H}_\infty^2)}{3A}}Me^{2M}t^{-2d-1}\, ,
\end{align*}
so, simplifying, and taking $\ep^2 \leq (4\Lambda)^{-1}\inp*{1- \frac{256(4+2\norm{\nabla H}_\infty^2)}{3A}}$,
\begin{align*}
\partial_t &\int_{\Rd\times\Rd} \psi_w^2 f^2\,dxdv \\
&\leq -\frac{1}{16t}\inp*{1-\frac{256(4+2\norm{\nabla H}_\infty^2)}{3A}}M\int  \psi_w^2 f^2 + C\inp*{1-\frac{256(4+2\norm{\nabla H}_\infty^2)}{3A}}Me^{2M}t^{-2d-1}
\,.
\end{align*}
Choosing $M$ so that 
$$C\inp*{1-\frac{256(4+2\norm{\nabla H}_\infty^2)}{3A}}M = 2d+2\, ,$$
we can compute that
\begin{align*}
\partial_t&\inp*{t^{2d+1} \int_{\Rd\times\Rd} \psi_w^2 f^2\,dxdv} \\
&\quad= t^{2d+1}\inp*{\partial_t \int_{\Rd\times\Rd} \psi_w^2 f^2\,dxdv + \inp*{2d+1} \frac{1}{t}\int_{\Rd\times\Rd} \psi_w^2f^2\,dxdv} \\
&\quad\leq t^{2d+1}\inp*{-\frac{1}{t}\inp*{2d+2}\int_{\Rd\times\Rd}  \psi_w^2 f^2\,dxdv} \\
&\qquad+ t^{2d+1}\inp*{C\inp*{1-\frac{256(4+2\norm{\nabla H}_\infty^2)}{3A}}Me^{2M}t^{-2d-1}+ \inp*{2d+1} \frac{1}{t}\int_{\Rd\times\Rd} \psi_w^2f^2\,dxdv} \\
&\quad\leq C\inp*{1-\frac{256(4+2\norm{\nabla H}_\infty^2)}{3A}}Me^{2M}\\
&\quad= C \exp\inp*{\frac{CA}{3A-256(4+2\norm{\nabla H}_\infty^2)}}
\,.
\end{align*}
From here, we can integrate in time to get the desired bound. 

To conclude, we first note that as the bounds on the left hand side of $\eref{smalltoffdiag}$ hold independently of $\ep$, we can send $\ep\to 0$ to see that these bounds also apply to $P$. Now, 
$$E_A(t,x,v,y,w) := \exp\inp*{\frac{3\abs{(x-y) +tw}^2}{At^3} + \frac{32\abs{v-w}^2 +32\abs{w}^2}{3At}}\,.$$
Furthermore, we can use the triangle inequality to see that
$$\abs{x-y+tw}^2 \leq 2\abs{z-y+tw}^2 + 4\abs{z-x+tv}^2 + 8t^2\abs{v}^2 +8t^2\abs{w}^2\,,$$
and 
$$\abs{v-w}^2 \leq 2\abs{v-u}^2 + 2\abs{u-w}^2\, .$$
Using these estimates as well as \eref{smalltoffdiag}, we see that, for $t\leq \frac{1}{4}$ and $x,y,v,w\in\Rd$,
\begin{align*}
&P(2t,x,v, y, w) \\
&= \int_{\Rd\times\Rd} P(t,x,v,z,u) P(t,z,u, y,w) \, dm(z,u) \\
&\leq CE_A(t,cx,cv,cy,cw)^{-1} \\
&\qquad\times\int E_A(t,z,u,x,v)P(t,x,v,z,u) E_A(t,z,u,y,w)P(t,z,u, y,w) \, dm(z,u) \\
&\leq CE_A(t,cx,cv,cy,cw)^{-1}\inp*{\int E_A^2P^2\,dm}^\frac{1}{2} \inp*{\int E_A^2P^2\,dm}^\frac{1}{2} \\
&\leq CE_A(t,cx,cv,cy,cw)^{-1}\inp*{\int_{\Rd\times\Rd} E_A^2P^2\,dzdu}^\frac{1}{2} \inp*{\int_{\Rd\times\Rd} E_A^2P^2\,dzdu}^\frac{1}{2} \\
&\leq C t^{-2d}\exp\inp*{-c\frac{\abs{(x-y) +tw}^2 + t^2\abs{v-w}^2 + t^2\abs{w}^2}{At^3}}\, . 
\end{align*}

\subsection*{Acknowledgments} 
The author was partially supported by NSF Grant DMS-2000200. 

\small
\bibliographystyle{abbrv}
\bibliography{hypoelliptic}
\end{document}